\documentclass{article}

\usepackage[numbers]{natbib}
\usepackage{blindtext}
\usepackage{lineno,hyperref}
\modulolinenumbers[5]
\usepackage{graphicx} 
\usepackage{subfigure}
\usepackage{tikz}
\usepackage{multirow}
\usetikzlibrary{positioning}
\usepackage{colortbl}
\usepackage{amsmath,amssymb}  
\usepackage{chngcntr}
\usepackage{epstopdf}
\usepackage{lipsum}   
\usepackage{setspace}
\usepackage{graphicx}
\usepackage{subcaption}
\usepackage{comment}
\usepackage{nameref}
\usepackage{float}
\usepackage{stackengine}
\usepackage{makecell}
 \usepackage{eurosym}
\usetikzlibrary{bayesnet}
\usepackage{natbib}
\usepackage{xurl}
\usepackage{adjustbox} 
\usepackage{tikz}
\usepackage{pgfplotstable,colortbl}
\usepackage{xcolor}
\pgfplotsset{compat=1.12}
\usepgfplotslibrary[colormaps] 
\usetikzlibrary{pgfplots.colormaps} 
\usepackage{pgfplotstable}
\usepackage{pgfplots}
\usepackage{authblk}
\usepackage[T1]{fontenc}
\usepackage[utf8]{inputenc}
\usepackage{lmodern} 
\usepackage{lineno}
\usepackage{booktabs}
\usepackage{xcolor}
\usepackage{enumitem}
\usepackage{subcaption}
\pgfplotsset{compat=1.17}

\newtheorem{theorem}{Theorem}

\newtheorem{proposition}[theorem]{Proposition}

\newenvironment{proof}[1][Proof]{\noindent\textbf{#1.} }{\ \rule{0.5em}{0.5em}}

\newcommand{\Uad}{\mathcal U_{\mathrm{ad}}}

\newcommand{\dd}{\,\mathrm d}
\begin{document}

\title{Spatially Explicit Optimal Harvesting and Adjoint-Based Diagnostics for Nonlinear Biomass Dynamics}
\author[1]{Tin Nwe Aye}
\author[2]{Wolfgang Bock}
\affil[1]{Department of Mathematics, Linn\'{e}universitetet, Växjö, Sweden}
\affil[2]{Department of Mathematics, Linn\'{e}universitetet, Växjö, Sweden}
\maketitle

\begin{abstract}
We develop a spatially explicit nonlinear biomass model for exploited fish populations incorporating density-dependent Beverton–Holt production, natural mortality, diffusion-driven spatial movement, and harvesting. Starting from a reduced biomass model motivated by spawning stock biomass data, we derive a reaction–diffusion model with Neumann boundary conditions and spatially distributed harvesting. We establish positivity, boundedness, and well-posedness of the biomass dynamics, and identify a critical harvesting threshold separating persistence and extinction regimes of spatially homogeneous equilibria. A distributed optimal harvesting problem is then formulated, and the corresponding state–adjoint optimality system and pointwise projection characterization of the harvesting control are derived. The adjoint variable provides a measure of the marginal future value of biomass within the harvesting objective and offers a diagnostic that complements biomass abundance alone. Numerical simulations examine the coupled evolution of biomass, the adjoint variable, and optimal harvesting effort. The results show that diffusion progressively reduces the initial spatial heterogeneity in biomass, while the computed adjoint and optimal harvesting fields exhibit limited spatial variation under the parameter regime considered. These results illustrate how the coupled state–adjoint–control framework links spatial biomass dynamics with future management value and harvesting decisions.
\end{abstract}

\section{Introduction}
The interaction between nonlinear population dynamics, spatial dispersal, and
resource exploitation leads naturally to a class of reaction-diffusion control
problems in which the state dynamics and the harvesting strategy are strongly
coupled. Such problems arise in fisheries, forestry, epidemiology, and other
renewable-resource systems, where the evolution of a spatially distributed
population is influenced simultaneously by intrinsic growth, mortality,
movement, and externally imposed interventions. From a mathematical point of
view, these models combine nonlinear parabolic dynamics with bounded
distributed controls and raise questions concerning positivity, persistence,
stability, existence of optimal controls, and the structure of the associated
state-adjoint system.

In fisheries applications, these questions are particularly relevant because
harvesting acts on populations that are heterogeneous in space and evolve on
time scales that are not necessarily aligned with the time scale of management
decisions. Fish populations are affected by biological production, natural
mortality, movement, and fishing pressure
\citep{hilborn2013quantitative,quinn1999quantitative,murray2002mathematical,webb1985theory}.
These mechanisms determine the evolution of biomass both globally and locally
\citep{goethel2021revival}. Although fisheries monitoring programmes routinely
monitor aggregate quantities such as spawning stock biomass, fishing mortality,
and recruitment \citep{slu_aqua2025}, such indicators do not directly resolve
the spatial redistribution of biomass or the consequences of spatially
dependent harvesting decisions \citep{goethel2011incorporating}. This motivates
the use of spatially explicit models in which biological dynamics and
harvesting are treated within a common analytical framework.

Reaction-diffusion equations provide a natural mathematical description of
spatial redistribution and dispersal in population systems
\citep{cantrell2004spatial,murray2002mathematical,okubo2001diffusion}. In
combination with nonlinear recruitment and mortality mechanisms, they lead to
semilinear parabolic equations whose qualitative behavior can depend
sensitively on harvesting intensity. Of particular interest is the transition
between persistence and extinction regimes. Such threshold phenomena are
already visible in reduced population models, but their relation to the spatial
problem and to optimal harvesting policies is less immediate. A spatial model
must therefore address not only the existence and boundedness of the state
solution, but also the stability of biologically relevant equilibria and the
effect of dispersal on the long-time behavior of perturbations.

The control problem introduces a second mathematical layer. A harvesting
decision affects both the immediate yield and the future state of the
population, so an optimal policy cannot in general be determined from the
current biomass alone. Optimal control theory provides a systematic framework
for balancing these competing effects
\citep{lions1971optimal,lenhart2007optimal,clark2010mathematical}. For
distributed systems governed by partial differential equations, the
corresponding adjoint variable quantifies the sensitivity of the objective
functional with respect to perturbations of the state
\citep{lions1971optimal,evans2022partial}. In a population-harvesting context,
this gives the adjoint a particularly useful interpretation: it represents the
marginal future value of biomass relative to the optimization objective. The
state and adjoint variables therefore provide complementary information, with
the state describing current abundance and the adjoint measuring the
consequence of preserving or removing biomass for future system performance.

Spatially distributed optimal harvesting problems have been studied in a
variety of settings; see, for example,
\citep{cohen1987review,ding2009optimal,kelly2016optimal}. The present work is
motivated by the observation that the analytical population dynamics, the
critical harvesting threshold, and the adjoint-based interpretation can be
developed within a single nonlinear framework. Rather than treating the
reaction-diffusion model merely as a numerical constraint in an optimization
problem, we first analyze the underlying nonlinear biomass dynamics and then
build the distributed optimal-control problem on this analytical foundation.

More precisely, we consider a biomass model with Beverton-Holt-type
density-dependent production, proportional natural mortality, and harvesting.
We first study a reduced scalar model and identify an explicit critical
harvesting intensity separating the existence and nonexistence of a positive
equilibrium. This threshold provides a reference point for the subsequent
spatial problem. We then introduce a reaction--diffusion model with homogeneous
Neumann boundary conditions and a bounded spatially distributed harvesting
control. For this system we establish positivity and a priori boundedness of
solutions and prove global well-posedness in the natural parabolic setting. We
also analyze the spatially homogeneous equilibria and show that the positive
equilibrium is linearly asymptotically stable whenever it exists, whereas the
zero equilibrium becomes stable above the critical harvesting threshold. Thus,
the persistence threshold of the reduced model is retained in the spatial
system, while diffusion acts to damp nonhomogeneous perturbation modes.

On this basis, we formulate a finite-horizon distributed optimal harvesting
problem with quadratic effort cost. The admissible controls are bounded in
space and time, and the associated state equation is the nonlinear
reaction-diffusion equation described above. We establish the existence of an
optimal distributed control and derive the corresponding first-order
optimality system. The optimal control is characterized pointwise through a
projection formula involving both the biomass state and the adjoint variable.
This characterization makes explicit the competition between current biomass
availability and its future marginal value: harvesting is favored where
biomass is abundant and its adjoint value is comparatively small, whereas large
adjoint values penalize current extraction.

The principal contribution of the paper is therefore the construction of a
single state-adjoint-control framework in which the qualitative dynamics of a
nonlinear biomass equation, the persistence/extinction threshold, and the
distributed harvesting problem are analyzed jointly. In particular, the
adjoint is used not only as an auxiliary variable required to derive the
optimality condition, but also as a diagnostic quantity that measures the
spatio-temporal marginal value of biomass. This provides a natural way to
distinguish regions of high current abundance from regions whose biomass has a
larger contribution to future system performance.

The analytical results are complemented by numerical experiments. As an
empirical motivation, the reduced model is compared with spawning stock biomass
observations, while the spatial optimality system is solved by a
forward-backward sweep method
\citep{mcasey2012convergence,lenhart2007optimal,soukaina2022optimal}. The
simulations track simultaneously the biomass state, the adjoint variable, and
the optimal harvesting effort. For the parameter regime considered, diffusion
progressively suppresses the initial spatial heterogeneity in the biomass, and
the adjoint and harvesting fields likewise approach weakly varying spatial
profiles. A sensitivity study with respect to the diffusion coefficient is used
to examine how strongly the redistribution mechanism affects this
state--adjoint--control coupling. The numerical results thus complement the
analysis by illustrating how spatial smoothing, future marginal value, and
optimal extraction interact within the nonlinear system.

The paper is organized as follows. Section~\ref{sec:biological} introduces the
biological background and empirical motivation. Section~\ref{sec:SPM} presents
the age- and space-structured population model and derives an aggregate
biomass balance. Section~\ref{sec:ReducedM} studies the reduced biomass model
and identifies the critical harvesting threshold.
Section~\ref{sec:SpatialM} develops the spatial reaction-diffusion model and
establishes positivity, boundedness, well-posedness, and stability properties.
Section~\ref{sec:disOptimal} formulates the distributed optimal-control problem
and derives the state--adjoint optimality system.
Section~\ref{sec:numericalSt} describes the numerical method and diagnostic
quantities, while Section~\ref{sec:numericalSi} presents the numerical
experiments. Finally, Section~\ref{sec:discussion} summarizes the main results
and discusses possible extensions.

\section{Biological Background}\label{sec:biological}

This study uses herring (\textit{Clupea harengus}) as the biological context for developing a spatial biomass and harvesting model. The observational data considered in the study concern herring in the Central Baltic Sea, covering subdivisions 25--29 and 32. The available spawning stock biomass (SSB) observations cover 1974--2025, while fishing mortality information for herring aged 3--6 years is available for 1974--2024 \citep{slu_aqua2025}. These data provide an empirical basis for examining the temporal behavior of the reduced biomass model.

Spawning stock biomass represents the biomass of mature individuals contributing to reproduction and is a standard quantity used in fisheries assessment. Recruitment describes the addition of new individuals to the population following the early life stages. In the present study, these quantities are used to motivate a reduced biomass description in which the reproductive contribution of the population is represented through a density-dependent production term.

The detailed age structure of a fish population is important in fisheries assessment, but it is not explicitly represented in the reduced model developed here. Instead, we use a single biomass state variable \(B(t)\) to describe the effective spawning biomass. This reduction provides a tractable formulation for studying the interaction between population dynamics and harvesting while retaining the main biological mechanisms relevant to the present analysis.

We represent density-dependent biomass production using the Beverton--Holt form
\begin{equation}\label{eq:BH}
    R(B)=\frac{aB}{b+B},
\end{equation}
where \(a>0\) determines the maximum production level and \(b>0\) controls the saturation scale. Natural mortality is represented by the proportional loss term \(mB\), where \(m>0\) is the natural mortality rate. Harvesting is represented by
\begin{equation}\label{eq:harvesting}
    H(t)=\alpha u(t)B(t),
\end{equation}
where \(u(t)\) denotes harvesting effort and \(\alpha>0\) represents harvesting efficiency. The resulting reduced biomass dynamics are therefore
\begin{equation}\label{eq:ReducedBiomass}
    \frac{dB}{dt}
=
\frac{aB}{b+B}
-\alpha u(t)B
-mB.
\end{equation}

This formulation is subsequently extended to a spatial setting. When biomass and harvesting vary across a habitat, spatial redistribution is represented by a diffusion term, while harvesting becomes a distributed control \(u(x,t)\). This leads to the spatial reaction--diffusion model studied in the following sections. The spatial formulation is intended as a mathematical representation of biomass redistribution and local harvesting rather than as a movement model of herring.	
	
\section{Structured Population Model}\label{sec:SPM}
To incorporate biological realism and provide a general framework for population dynamics, we consider an age- and space-structured model. This formulation captures spatial movement, ageing processes, and distributed mortality, and serves as a basis for deriving simplified biomass models used in subsequent sections.

Let $B(x,t,a)$ denote the biomass state at spatial location $x \in \Omega$, time $t \in [0,T]$, and age $a \ge 0$. The dynamics are governed by a transport–diffusion–reaction equation:
\begin{align}\label{eq:biomass}
\partial_t B(x,t,a)
+
\partial_a B(x,t,a)
&=
\nabla \cdot \big(D(x)\nabla B(x,t,a)\big)
-
\alpha\,u(x,t)\,B(x,t,a)\\
&\quad
-
d(x,a)\,B(x,t,a) 
\nonumber.
\end{align}
where $D(x)$ denotes the spatial diffusion coefficient. The natural mortality rate and the harvesting control are represented by $d(x,a)$ and $u(x,t)$, respectively. The coefficient $\alpha > 0$ scales the effectiveness of harvesting effort. 

The total biomass is 
\begin{equation}\label{eq:TBiomass}
T(t) = \int_{\Omega} \int_{0}^{\infty} B(x,t,a)\, da\, dx
\end{equation}

Recruitment of newborn individuals is imposed through the boundary condition at age $a=0$:
\begin{equation}\label{eq:recBoundary}
B(x,t,0) = \int_{0}^{\infty} \rho(x,a)\,B(x,t,a)\, da,
\end{equation}
where $\rho(x,a)$ represents the fertility/reproduction or recruitment rate.

\subsection{Total Biomass Balance}
To connect the structured formulation with aggregate population quantities, we consider the evolution of the total biomass. The total biomass at time t is given by Equation
\eqref{eq:TBiomass}.

To determine the rate of change of the total biomass, we differentiate $T(t)$ with respect to time:
\begin{equation*}
\frac{dT}{dt}
=
\int_{\Omega} \int_{0}^{\infty} \partial_t B(x,t,a)\, da\, dx.
\end{equation*}

By using Equation \eqref{eq:biomass},
we rewrite $\partial_t B$ as
\begin{equation*}
\partial_t B
=
- \partial_a B
+ \nabla \cdot (D(x)\nabla B)
- d(x,a)\,B
- \alpha u(x,t)\,B.
\end{equation*}

Substituting into the expression for $\frac{dT}{dt}$ gives
\begin{align*}
\frac{dT}{dt}
&=
\int_{\Omega} \int_{0}^{\infty}
\left(
- \partial_a B
+ \nabla \cdot (D(x)\nabla B)
- d(x,a)\,B
- \alpha u(x,t)\,B
\right)
da\, dx.
\end{align*}

We now evaluate each term separately.
For age-derivative term, we have
\begin{equation*}
\int_{0}^{\infty} -\partial_a B\, da
=
B(x,t,0),
\end{equation*}
where the assumption is used that $B(x,t,a) \to 0$ as $a \to \infty$.

For diffusion term, it is
\begin{equation*}
\int_{\Omega} \nabla \cdot (D(x)\nabla B)\, dx = 0,
\end{equation*}
under no-flux boundary conditions.

Thus, we obtain
\begin{equation*}
\frac{dT}{dt}
=
\int_{\Omega} B(x,t,0)\, dx
-
\int_{\Omega} \int_{0}^{\infty} d(x,a)\,B\, da\, dx
-
\alpha\int_{\Omega} \int_{0}^{\infty} u(x,t)\,B\, da\, dx.
\end{equation*}

Using the recruitment boundary condition \eqref{eq:recBoundary},
we finally obtain
\begin{align*}
\frac{dT}{dt} &=
\int_{\Omega} \int_{0}^{\infty} \rho(x,a)\,B(x,t,a)\, da\, dx
-
\int_{\Omega} \int_{0}^{\infty} d(x,a)\,B(x,t,a)\, da\, dx\\
&\qquad-
\alpha\int_{\Omega} \int_{0}^{\infty} u(x,t)\,B(x,t,a)\, da\, dx.
\end{align*}

To simplify the notation, we introduce the aggregated quantities
\begin{align*}
W(t) &= \int_{\Omega}\int_{0}^{\infty} \rho(x,a)\,B(x,t,a)\, da\, dx, \\
M(t) &= \int_{\Omega}\int_{0}^{\infty} d(x,a)\,B(x,t,a)\, da\, dx, \\
U(t) &= \alpha\int_{\Omega}\int_{0}^{\infty} u(x,t)\,B(x,t,a)\, da\, dx.
\end{align*}

The total biomass dynamics can then be written as
\begin{equation}\label{eq:aggregate_balance}
\frac{dT}{dt} = W(t) - M(t) - U(t).
\end{equation}

Equation~\eqref{eq:aggregate_balance} shows that diffusion redistributes biomass in space but does not directly change total biomass under no-flux boundary conditions. The total biomass is changed by recruitment, natural mortality, and harvesting. This observation motivates the reduced biomass model studied in the next section.

\section{Reduced Biomass Model}\label{sec:ReducedM}
The aggregate balance derived in the previous section describes the evolution of total biomass and motivates the construction of a reduced, one-dimensional closure. Since recruitment is primarily determined by the reproductive capacity of the mature population, spawning stock biomass is selected as the reduced observable rather than total biomass. 

To obtain a tractable one-patch model, we assume that the age and spatial structure remain sufficiently stable so that spawning stock biomass can be represented by an effective scalar variable $B(t)$, proportional to the corresponding aggregate biomass. The structured recruitment term is then approximated by a density-dependent recruitment function of the effective spawning stock biomass. 
In this reduced closure, we adopt the Beverton-Holt form \eqref{eq:BH}. Harvesting is assumed proportional to both effort and biomass according to \eqref{eq:harvesting}, yielding the reduced dynamics \eqref{eq:ReducedBiomass}.
\begin{proposition}[Positivity and boundedness]
Let $u\in L^\infty(0,T)$ satisfy
\[
0\le u(t)\le u_{\max}
\qquad\text{a.e. on }(0,T).
\]
If $B_0\ge0$, then the initial value problem
\begin{equation}\label{eq:reduced_model}
\frac{\dd B}{\dd t}
=
\frac{aB}{b+B}
-\alpha u(t)B
-mB,
\qquad
B(0)=B_0,
\end{equation}
admits a unique global nonnegative solution on $[0,T]$. Moreover,
\[
B(t)\ge0
\qquad\text{for all } t\in[0,T],
\]
and
\[
0\le B(t)\le
\max\left\{B_0,\frac{a}{m}\right\}
\qquad\text{for all } t\in[0,T].
\]
\end{proposition}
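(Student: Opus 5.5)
Since $u$ is only bounded and measurable, I will treat \eqref{eq:reduced_model} as a Carathéodory ODE. The first obstacle is that the Beverton--Holt term $aB/(b+B)$ is singular at $B=-b$, so it is not globally Lipschitz on $\mathbb{R}$. I will therefore replace the right-hand side by the truncated field
\[
f(t,B)=\frac{aB^+}{b+B^+}-\alpha u(t)B-mB,
\]
where $B^+=\max\{B,0\}$. The map $B\mapsto aB^+/(b+B^+)$ is globally Lipschitz with constant $a/b$, since its derivative on $B>0$ is $ab/(b+B)^2\le a/b$. Hence $f$ is measurable in $t$, and it is globally Lipschitz in $B$ with constant $a/b+\alpha u_{\max}+m$, uniformly in $t$. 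Carathéodory's theorem, or equivalently a Picard iteration on the integral equation in $C([0,T])$ using a weighted sup norm, then yields a unique absolutely continuous global solution on $[0,T]$. Once I show this solution is nonnegative, it also solves the original equation, and uniqueness carries over because the two vector fields agree on $B\ge 0$.

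\textbf{Nonnegativity.} I will write the truncated equation in linear form $B'=c(t)B$ with
\[
c(t)=\frac{a\,\mathbf 1_{\{B>0\}}}{b+B^+}-\alpha u(t)-m,
\]
which is a bounded measurable coefficient along the solution. The integrating factor then gives
\[
B(t)=B_0\exp\Big(\int_0^t c(s)\dd s\Big)\ge 0.
\]
As a cross-check, $B\equiv0$ is a solution, so uniqueness alone already prevents a solution starting at $B_0>0$ from crossing zero.

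\textbf{Upper bound.} Set $K=\max\{B_0,a/m\}$. Since $B\ge0$ and $u\ge0$,
\[
B'\le \frac{aB}{b+B}-mB\le a-mB
\quad\text{a.e.},
\]
using $B/(b+B)\le 1$. By the comparison lemma for absolutely continuous functions, or directly via the integrating factor $e^{mt}$,
\[
B(t)\le e^{-mt}B_0+\frac{a}{m}\big(1-e^{-mt}\big),
\]
which is a convex combination of $B_0$ and $a/m$ and is therefore at most $K$. An alternative route is to observe that $B'\le a-mK\le 0$ whenever $B\ge K$, and then argue by contradiction at a first exit time from $[0,K]$.

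The main obstacle is technical rather than conceptual: the comparison and sign arguments must be run in the a.e. sense because $u$ is only in $L^\infty$. Working with the integral (variation-of-constants) formulas throughout avoids any pointwise differentiability issues.
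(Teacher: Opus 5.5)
Your proof is correct. It reaches the result by a somewhat different technical route than the paper.

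The upper bound is the same as the paper's. From $u\ge0$ and $aB/(b+B)\le a$ you get $B'\le a-mB$ and compare with $y'=a-my$. You simply make that comparison explicit through the factor $e^{mt}$ and the observation that the bound is a convex combination of $B_0$ and $a/m$.

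The other two steps differ.

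\emph{Existence.} The paper takes a local Carath\'eodory solution of the original, locally Lipschitz, field. It then extends it to $[0,T]$ because the a priori bounds exclude blow-up. You instead replace $B$ by $B^+$ in the Beverton--Holt term to get a globally Lipschitz field, obtain a global solution at once, and remove the truncation after proving nonnegativity. This is the same device the paper itself uses, with a smooth cutoff $\rho$, in its spatial well-posedness theorem.

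\emph{Positivity.} The paper argues that the half-line is invariant because $B=0$ is an equilibrium. That argument tacitly relies on uniqueness, which is exactly your cross-check. Your main argument instead writes the equation as $B'=c(t)B$ with a bounded measurable coefficient, giving $B(t)=B_0\exp\bigl(\int_0^t c(s)\,ds\bigr)$. This is rigorous in the a.e.\ sense and even yields strict positivity when $B_0>0$.

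What each route buys: the paper's is shorter and works on the natural domain $B>-b$, but it leans on the continuation principle (including not approaching $B=-b$). Yours avoids both the singularity and the continuation step, at the cost of a truncation.

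One small point to tighten is your uniqueness transfer. Agreement of the two fields on $B\ge0$ gives uniqueness only among nonnegative solutions. For uniqueness among all solutions of the original equation, add one of two remarks: the original field is locally Lipschitz on $(-b,\infty)$, or your integrating-factor argument applies verbatim to the original equation, since $aB/(b+B)=\frac{a}{b+B}\,B$ there, so every such solution stays nonnegative.
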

\begin{proof}
The right-hand side of \eqref{eq:reduced_model} is measurable in $t$ and
locally Lipschitz continuous in $B$. Hence a unique local solution exists.
From Equation \eqref{eq:ReducedBiomass}, $\dot B=0$ whenever $B=0$.
Therefore, the nonnegative half-line
is positively invariant, and thus $B(t)\ge0$ for all $t\in[0,T]$.

Furthermore, using $u(t)\ge0$ and
\[
\frac{aB}{b+B}\le a,
\]
we obtain
\[
\frac{dB}{dt}\le a-mB.
\]
A comparison argument with the solution of
\[
y'=a-my, \qquad y(0)=B_0,
\]
yields
\[
B(t)\le \max\left\{B_0,\frac{a}{m}\right\}
\qquad \text{for all } t\in[0,T].
\]
The boundedness of $B$ excludes finite-time blow-up and therefore the
local solution extends globally.
\end{proof}

\begin{proposition}[Equilibria and recovery threshold]
Consider \eqref{eq:reduced_model} under a constant harvesting effort
$u(t)\equiv u_c\ge0$. Then the equilibrium points are given by
\[
B^*_0=0
\]
and
\[
B^*_1(u_c)=\frac{a}{m+\alpha u_c}-b.
\]

Moreover, a positive equilibrium, $B^*_1(u_c)>0$, exists if and only if
\[
u_c<u_{\mathrm{crit}},
\qquad
u_{\mathrm{crit}}
:=
\frac{1}{\alpha}\left(\frac{a}{b}-m\right).
\]
\end{proposition}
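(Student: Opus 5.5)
The plan is to set the right-hand side of \eqref{eq:reduced_model} to zero with $u\equiv u_c$ and factor out $B$. Writing
\[
f(B)=B\left(\frac{a}{b+B}-(m+\alpha u_c)\right),
\]
the equilibria on the biologically relevant domain $B\ge0$ (where $b+B>0$, so $f$ is well defined) are the zeros of either factor. The first factor gives $B^*_0=0$.

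For the second factor, we solve $\frac{a}{b+B}=m+\alpha u_c$. Since $m>0$ and $u_c\ge0$, the constant $m+\alpha u_c$ is strictly positive, so we may invert and obtain $b+B=\frac{a}{m+\alpha u_c}$, that is,
\[
B=\frac{a}{m+\alpha u_c}-b=B^*_1(u_c).
\]
If one also wants solutions on the full line, one notes that $B=-b$ is excluded from the domain. The map $B\mapsto a/(b+B)$ is strictly monotone on $(-b,\infty)$, so this root is unique and there are no other equilibria.

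For the threshold, we start from the equivalence $B^*_1(u_c)>0 \iff \frac{a}{m+\alpha u_c}>b$. Because $b>0$ and $m+\alpha u_c>0$, this is equivalent to $m+\alpha u_c<a/b$, i.e.\ $\alpha u_c<a/b-m$. Dividing by $\alpha>0$ gives $u_c<u_{\mathrm{crit}}$. Every step is an equivalence, so both directions of the ``if and only if'' follow at once. If $a/b\le m$, then $u_{\mathrm{crit}}\le0$ and no admissible $u_c\ge0$ yields a positive equilibrium; this is consistent with the statement, and I will note it explicitly.

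The only point needing care, and it is minor, is keeping track of the signs when multiplying and dividing inequalities. The positivity of $m+\alpha u_c$, $b$ and $\alpha$ is exactly what makes the manipulations reversible. I will also remark that in the boundary case $u_c=u_{\mathrm{crit}}$ the two equilibria coincide at $0$, which explains why the inequality must be strict.
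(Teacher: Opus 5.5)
Your proposal is correct and matches the paper's proof: factor out $B$, solve $\frac{a}{b+B}=m+\alpha u_c$ for $B^*_1(u_c)$, and rewrite $B^*_1(u_c)>0$ as $a>b(m+\alpha u_c)$, which is equivalent to $u_c<u_{\mathrm{crit}}$. Your remarks on why each step is reversible (positivity of $b$, $\alpha$ and $m+\alpha u_c$), on the degenerate case $a/b\le m$, and on the two equilibria coinciding at $0$ when $u_c=u_{\mathrm{crit}}$ are correct additions that the paper does not make explicit.
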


\begin{proof}
Equilibria satisfy
\[
0=\frac{aB}{b+B}-(m+\alpha u_c)B.
\]
Factoring out $B$ gives
\[
B\left(\frac{a}{b+B}-(m+\alpha u_c)\right)=0.
\]
Hence $B^*_0=0$ and
\[
B^*_1(u_c)=\frac{a}{m+\alpha u_c}-b.
\]
The condition $B^*_1(u_c)>0$ is equivalent to
\[
a>b(m+\alpha u_c),
\]
or, equivalently,
\[
u_c<\frac{1}{\alpha}\left(\frac{a}{b}-m\right)
=u_{\mathrm{crit}}.
\]
The conclusion follows.
\end{proof}

\subsection{Optimal Harvesting in the Reduced Model}
	
Before considering the spatially distributed problem, we first formulate the optimal harvesting problem for the reduced biomass model. We define the admissible control set as
\[\Uad^{\mathrm{ODE}}=
\left\{u\in L^\infty(0,T):0\le u(t)\le u_{\max}\ \text{a.e.}\right\}.\]
A basic yield-maximization problem is
\begin{equation}\label{eq:ode_objective}
\max_{u\in\Uad^{\mathrm{ODE}}}J(u)=
\int_0^T \alpha u(t)B(t)\,\dd t,
\end{equation}
subject to~\eqref{eq:reduced_model}.
	
The Hamiltonian is
\begin{equation}\label{eq:H_ode}
\mathcal H(B,u,\lambda)=\alpha uB+\lambda
\left(\frac{aB}{b+B}-\alpha uB-mB\right).
\end{equation}

The adjoint equation is
\begin{equation}\label{eq:adjoint_ode}
\dot\lambda=-\frac{\partial\mathcal H}{\partial B}=
-\alpha u-\lambda\left(\frac{ab}{(b+B)^2}-\alpha u-m\right),\qquad\lambda(T)=0.
\end{equation}
The switching function is
\begin{equation}\label{eq:switching_ode}
\Phi(t)=\frac{\partial\mathcal H}{\partial u}=\alpha B(t)(1-\lambda(t)).
\end{equation}
Hence, whenever $B(t)>0$, the optimal control satisfies
\begin{equation}\label{eq:bang_bang_ode}
u^\ast(t)=\begin{cases}
u_{\max}, & \lambda(t)<1,\\[0.3em]
0, & \lambda(t)>1.
\end{cases}
\end{equation}

The bang--bang structure of \eqref{eq:bang_bang_ode} follows from the linear dependence of both the state equation and the objective functional on the
control variable. Although mathematically natural, such controls may be numerically sensitive and do not penalize excessive harvesting effort. For this reason, the spatially distributed model introduced below incorporates a quadratic effort cost, leading to a projected feedback characterization of the optimal control rather than a purely bang--bang strategy.

\section{Spatial Reaction--Diffusion Biomass Model}\label{sec:SpatialM}
We now extend the reduced biomass model to a spatially distributed setting. The bounded domain \(\Omega\subset\mathbb{R}^d\) is assumed to have a sufficiently smooth boundary \(\partial\Omega\), where \(d\in\{1,2\}\). The biomass state is denoted by $B(x,t)$, where $x\in\Omega$ and $t\in[0,T]$. Spatial movement is represented by diffusion, while recruitment, mortality, and harvesting act locally.

The spatial biomass equation is
\begin{equation}\label{eq:spatial_model}
\partial_t B=D\Delta B+\frac{aB}{b+B}-\alpha u(x,t)B-mB,
\qquad x\in\Omega,\quad 0<t<T,
\end{equation}
with no-flux boundary condition
\begin{equation}\label{eq:neumann}
\partial_\nu B=0,
\qquad x\in\partial\Omega,\quad 0<t<T,
\end{equation}
and initial condition
\begin{equation}\label{eq:initial_spatial}
B(x,0)=B_0(x)\ge0.
\end{equation}
Here $D>0$ is the diffusion coefficient and $u(x,t)$ is a spatially distributed harvesting effort satisfying
\[0\le u(x,t)\le u_{\max}.\]

\begin{theorem}[Positivity and boundedness]\label{thm:positivity}
Assume that $B_0\in L^\infty(\Omega)$, $B_0\ge0$, and
$u\in L^\infty((0,T)\times\Omega)$ satisfies
\[
0\le u(x,t)\le u_{\max}
\qquad\text{a.e. in }\Omega\times(0,T).
\]
Then every classical solution of
\eqref{eq:spatial_model}--\eqref{eq:initial_spatial} satisfies
\[
B(x,t)\ge0
\qquad\text{for all }x\in\Omega,\; t\in[0,T].
\]
Moreover,
\[
0\le B(x,t)
\le
\|B_0\|_{L^\infty(\Omega)}+aT
\qquad\text{for all }x\in\Omega,\; t\in[0,T].
\]
\end{theorem}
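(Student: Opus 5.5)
The plan is to use the parabolic comparison (weak maximum) principle for Neumann problems twice: once with the zero subsolution and once with an explicit spatially constant supersolution. First I rewrite the reaction term in linearized form. For a classical solution $B$, set
\[
c(x,t):=\frac{a}{b+B(x,t)}-\alpha u(x,t)-m .
\]
The equation \eqref{eq:spatial_model} then reads $\partial_t B=D\Delta B+c(x,t)B$. One subtlety: $c$ is only defined where $b+B>0$. I handle this by working on the maximal interval on which $B>-b$; this interval contains a neighbourhood of $t=0$ because $B_0\ge0$ and $B$ is continuous. On any compact subinterval, $c$ is bounded, say $|c|\le K$.

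\textbf{Positivity.} Let $w=e^{-\lambda t}B$ with $\lambda>K$. Then
\[
\partial_t w=D\Delta w+(c-\lambda)w, \qquad c-\lambda<0 .
\]
Suppose $w$ has a negative minimum on $\overline\Omega\times[0,t_1]$ at some point with $t>0$. If the point is interior, then $\Delta w\ge0$ and $\partial_t w\le0$, while $(c-\lambda)w>0$, which is a contradiction. If the point lies on $\partial\Omega$, I exclude it using the Neumann condition together with the Hopf lemma. To avoid the nondegenerate-minimum issue, I perturb $w$ by $\varepsilon(1+t)$; the case $u\in L^\infty$ only is handled because we assume a classical solution, where the equation holds pointwise a.e. Hence $B\ge0$. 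In particular $B>-b$ persists, so the maximal interval is all of $[0,T]$.

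\textbf{Upper bound.} Using $B\ge0$, $u\ge0$ and $m>0$, the reaction term satisfies
\[
\frac{aB}{b+B}-\alpha uB-mB\le a .
\]
So $B$ is a subsolution of $\partial_t v=D\Delta v+a$ with Neumann data. The function $\bar B(t)=\|B_0\|_{L^\infty(\Omega)}+at$ solves this problem exactly, has zero normal derivative, and satisfies $\bar B(0)\ge B_0$. Let $z=B-\bar B$. Then
\[
\partial_t z-D\Delta z\le 0, \qquad \partial_\nu z=0, \qquad z(\cdot,0)\le0 .
\]
The weak maximum principle for the heat operator with Neumann condition (again via a perturbation and the Hopf lemma at the boundary) gives $z\le0$. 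Therefore $B\le\|B_0\|_{L^\infty(\Omega)}+at\le\|B_0\|_{L^\infty(\Omega)}+aT$.

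\textbf{Main obstacle.} The main obstacle is the rigorous handling of boundary minima and maxima under Neumann conditions, and the low regularity of $u$ in time and space. I would first rely on the standard comparison principle for classical solutions of semilinear Neumann problems with bounded coefficients (Protter--Weinberger type). If more rigor is needed, I would switch to the energy method: test the equation for $B$ with $B^-$, and the equation for $z$ with $z^+$, then integrate by parts. The boundary terms vanish by the Neumann condition. Gronwall's inequality then yields
\[
\|B^-(t)\|_{L^2}=0 \quad\text{and}\quad \|z^+(t)\|_{L^2}=0,
\]
which avoids pointwise arguments entirely.
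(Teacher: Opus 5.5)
Your proposal is correct and follows essentially the same route as the paper: comparison with the zero subsolution to get positivity, then the differential inequality $\partial_t B-D\Delta B\le a$ and comparison with the spatially constant supersolution $\|B_0\|_{L^\infty(\Omega)}+at$. Your additional steps make explicit the comparison principle that the paper invokes without proof: rewriting the reaction as $c(x,t)B$ with $c$ bounded while $B>-b$, the exponential weight, and the energy fallback that tests with $B^-$ and $z^+$.
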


\begin{proof}
Let
\[
F(x,t,B)
=
\frac{aB}{b+B}
-\alpha u(x,t)B
-mB.
\]
Since
\[
F(x,t,0)=0,
\]
the zero function is a subsolution of \eqref{eq:spatial_model}.
Because
\[
0=B_{\mathrm{sub}}(x,0)\le B_0(x),
\]
the parabolic comparison principle implies
\[
B(x,t)\ge0
\qquad\text{for all }(x,t)\in\Omega\times[0,T].
\]

Furthermore, using $B\ge0$,
\[
\frac{aB}{b+B}\le a,
\qquad
-\alpha u(x,t)B\le0,
\qquad
-mB\le0.
\]
From Equation \eqref{eq:spatial_model}, we have
\[
\partial_t B-D\Delta B\le a.
\]

Let $y(t)$ solve
\[
y'(t)=a,
\qquad
y(0)=\|B_0\|_{L^\infty(\Omega)}.
\]
Then
\[
y(t)=\|B_0\|_{L^\infty(\Omega)}+at.
\]
Since $y$ is independent of $x$,
\[
\Delta y=0,
\]
and therefore
\[
\partial_t y-D\Delta y=a.
\]
Moreover,
\[
B_0(x)\le \|B_0\|_{L^\infty(\Omega)}
= y(0)
\qquad\text{for all }x\in\Omega.
\]
Applying the parabolic comparison principle gives
\[
B(x,t)\le y(t)
=
\|B_0\|_{L^\infty(\Omega)}+at.
\]
Since $t\le T$,
\[
B(x,t)
\le
\|B_0\|_{L^\infty(\Omega)}+aT.
\]
This completes the proof.
\end{proof}

\begin{theorem}[Existence and uniqueness]
Assume that $B_0\in L^\infty(\Omega)$, $B_0\ge0$, and
\[
u\in L^\infty((0,T)\times\Omega),
\qquad
0\le u(x,t)\le u_{\max}
\]
almost everywhere. Then the problem
\eqref{eq:spatial_model}--\eqref{eq:initial_spatial}
admits a unique global mild solution
\[
B\in C([0,T];L^2(\Omega))
\cap
L^2(0,T;H^1(\Omega)).
\]
Furthermore, if $B_0$ and $\partial\Omega$ are sufficiently regular,
then the solution is classical for $t>0$.
\end{theorem}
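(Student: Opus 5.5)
The proof proceeds by a truncation and fixed-point argument in the mild (variation-of-constants) formulation, combined with the a priori bounds of Theorem~\ref{thm:positivity}. Let $S(t)=e^{tD\Delta_N}$ denote the analytic, contractive, positivity-preserving semigroup generated by the Neumann Laplacian on $L^2(\Omega)$; it is also contractive on $L^\infty(\Omega)$. The first complication is that the Beverton--Holt term $aB/(b+B)$ is singular at $B=-b$. We therefore work with the modified nonlinearity
\[
\tilde F(x,t,B)=\frac{aB^+}{b+B^+}-\alpha u(x,t)B-mB .
\]
The map $B\mapsto aB^+/(b+B^+)$ is globally Lipschitz on $\mathbb R$ with constant $a/b$, since its derivative on $B\ge0$ is $ab/(b+B)^2\le a/b$. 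Because $0\le u\le u_{\max}$, the map $\tilde F$ is globally Lipschitz in $B$ with constant $L=a/b+\alpha u_{\max}+m$, uniformly in $(x,t)$, and it is measurable in $(x,t)$. The Nemytskii operator $B\mapsto \tilde F(\cdot,\cdot,B)$ is then globally Lipschitz on $L^2(\Omega)$.

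\textbf{Step 1 (mild solution of the truncated problem).} Define
\[
\Phi(B)(t)=S(t)B_0+\int_0^t S(t-s)\tilde F(\cdot,s,B(s))\dd s
\]
on $X=C([0,T];L^2(\Omega))$, equipped with the weighted norm $\sup_t e^{-2Lt}\|B(t)\|_{L^2}$. The global Lipschitz bound makes $\Phi$ a contraction on all of $X$, so Banach's fixed-point theorem yields a unique global mild solution on $[0,T]$ with no need to patch local solutions together. Gronwall's lemma gives uniqueness directly as well.

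\textbf{Step 2 (energy regularity).} Since $\tilde F(B)\in L^2(0,T;L^2(\Omega))$, maximal regularity or a Galerkin energy estimate shows that the mild solution is the weak solution, with $B\in L^2(0,T;H^1(\Omega))$ and $\partial_tB\in L^2(0,T;(H^1)')$. The key estimate comes from testing with $B$:
\[
\tfrac12\tfrac{d}{dt}\|B\|_{L^2}^2+D\|\nabla B\|_{L^2}^2\le L\|B\|_{L^2}^2 .
\]

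\textbf{Step 3 (removing the truncation).} This is the step I expect to need the most care, because Theorem~\ref{thm:positivity} is stated only for classical solutions. I would redo its comparison argument at the weak level using Stampacchia's truncation. Testing the truncated equation with $B^-$, note that on $\{B<0\}$ we have $\tilde F=-(\alpha u+m)B$. This gives
\[
\tfrac{d}{dt}\|B^-\|_{L^2}^2\le 0,
\]
and since $B_0^-=0$, it follows that $B\ge0$. Hence $\tilde F=F$ along the solution, so $B$ solves the original problem. For the upper bound, test with $(B-\|B_0\|_\infty-at)^+$ to get $0\le B\le \|B_0\|_{L^\infty}+aT$, which is the bound of Theorem~\ref{thm:positivity}. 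Uniqueness for the original problem follows as well. Any mild solution of the original problem that stays nonnegative coincides with the truncated one, and nonnegativity itself holds by the same test-function argument, understood in the sense that $F$ is defined.

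\textbf{Step 4 (classical regularity).} For $t>0$, $B$ is bounded and $F(B)\in L^\infty$, so $L^p$ maximal regularity for all $p<\infty$ and Sobolev embedding (with $d\le2$) give $B$ Hölder continuous in $(x,t)$. If $u$ is additionally Hölder continuous, then $F(x,t,B(x,t))$ is Hölder continuous too, and parabolic Schauder theory with a smooth $\partial\Omega$ yields $B\in C^{2+\beta,1+\beta/2}$ on $\overline\Omega\times[\varepsilon,T]$. I would flag that for a merely $L^\infty$ control $u$, one only obtains strong $W^{2,1}_p$ solutions. The paper's phrase ``sufficiently regular'' would therefore have to include regularity of $u$.
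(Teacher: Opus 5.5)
Your proposal is correct and follows the paper's architecture. Both arguments extend the Beverton--Holt term to a globally Lipschitz function that agrees with it on $[0,\infty)$. The paper multiplies by a smooth cutoff $\rho$ vanishing below $-b/2$, while you use $aB^+/(b+B^+)$, which is a cosmetic difference. Both then obtain a unique global mild solution of the extended problem; your weighted-norm contraction simply makes explicit the ``standard semilinear theory'' the paper cites. Both finally show $B\ge0$, so the extension is inactive, and bound $B$ by $\|B_0\|_{L^\infty(\Omega)}+aT$.

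Where you depart from the paper, your version is the more rigorous one. The paper gets $B\ge0$ from an unspecified comparison principle. It gets the upper bound by citing Theorem~\ref{thm:positivity}, which is formulated for classical solutions and so does not apply as stated to the mild solution being constructed. Your Stampacchia tests with $B^-$ and $(B-\|B_0\|_{L^\infty(\Omega)}-at)^+$ work directly in the weak class from Step~2, namely $B\in L^2(0,T;H^1(\Omega))$ with $\partial_tB\in L^2(0,T;(H^1)')$. Your caveat in Step~4 is also correct. The paper's appeal to analytic smoothing cannot give a classical solution when $u$ is merely $L^\infty$, because $\partial_tB-D\Delta B$ inherits the discontinuities of $u$. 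Hölder continuity of $u$ therefore has to be part of ``sufficiently regular''.

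One sentence in Step~3 needs tightening. Nonnegativity of an arbitrary mild solution of the \emph{untruncated} problem does not follow from ``the same test-function argument''. On $\{-b<B<0\}$ the term $aB/(b+B)$ no longer vanishes. Testing with $B^-$ then leaves the contribution $\int_{\{B<0\}} aB^2/(b+B)\,dx$, whose integrand is positive on $\{-b<B<0\}$. To fix this, restrict to solutions with $b+B\ge\delta>0$ a.e., which is the natural class in which $F(B)$ is bounded. There the term is at most $(a/\delta)\|B^-\|_{L^2(\Omega)}^2$, and Gronwall gives $B^-\equiv0$. The paper proves uniqueness only for the extended problem, so this is an extra claim of yours rather than a place where you fall short of the paper.
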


\begin{proof}
Consider the operator
\[
A:=D\Delta
\]
subject to the homogeneous Neumann boundary condition
\[
\partial_\nu v=0
\quad\text{on }\partial\Omega.
\]
Its domain is
\[
\mathcal D(A)
=
\{v\in H^2(\Omega):\partial_\nu v=0
\text{ on }\partial\Omega\}.
\]
It is well known that $A$ generates an analytic semigroup
$\{e^{tA}\}_{t\ge0}$ on $L^2(\Omega)$.

The difficulty in applying the standard semilinear theory directly to
the biological reaction term is that the Beverton--Holt function
\[
B\mapsto \frac{aB}{b+B}
\]
is singular at $B=-b$. We therefore introduce a globally defined
extension of the reaction term.

Let
\[
R:=\|B_0\|_{L^\infty(\Omega)}+aT
\]
and choose a smooth function
$\rho:\mathbb R\to[0,1]$ such that
\[
\rho(B)=1
\quad\text{for }B\ge0,
\qquad
\rho(B)=0
\quad\text{for }B\le-\frac{b}{2}.
\]
Define the truncated recruitment function by
\[
\widetilde{G}(B)
=
\rho(B)\frac{aB}{b+B}.
\]
Since the support of $\rho$ is separated from the singular point $B=-b$, the function $\widetilde{G}$ is continuously differentiable and globally Lipschitz on $\mathbb R$. Consequently, for
\[
\widetilde F(x,t,B)
=
\widetilde G(B)
-\alpha u(x,t)B
-mB,
\]
we have
\[
|\widetilde F(x,t,B_1)
-\widetilde F(x,t,B_2)|
\le C|B_1-B_2|
\]
for all $B_1,B_2\in\mathbb R$, with a constant $C>0$ independent of $(x,t)$, since
$0\le u(x,t)\le u_{\max}$ almost everywhere.

We therefore consider the extended problem
\[
\partial_t B=AB+\widetilde F(x,t,B),
\]
with the same initial and boundary conditions as
\eqref{eq:spatial_model}--\eqref{eq:initial_spatial}. Its mild formulation is
\[
B(t)
=
e^{tA}B_0
+
\int_0^t
e^{(t-s)A}
\widetilde F(\cdot,s,B(s))
\,ds .
\]
Since $\widetilde F$ is globally Lipschitz in $B$, standard semilinear parabolic theory yields a unique global mild solution
\[
B\in C([0,T];L^2(\Omega))
\cap
L^2(0,T;H^1(\Omega)).
\]

It remains to verify that this solution also solves the original biological problem. Since $B_0\ge0$ and the extended reaction satisfies
\[
\widetilde F(x,t,0)=0,
\]
the comparison principle gives
\[
B(x,t)\ge0
\]
almost everywhere in $\Omega\times(0,T)$. Hence, throughout the solution trajectory,
\[
\rho(B(x,t))=1,
\]
and therefore
\[
\widetilde G(B(x,t))
=
\frac{aB(x,t)}{b+B(x,t)}.
\]
Thus the extended equation coincides with the original equation on the
nonnegative region, and the obtained mild solution is a mild solution
of \eqref{eq:spatial_model}--\eqref{eq:initial_spatial}.

Moreover, by Theorem~\ref{thm:positivity},
\[
0\le B(x,t)
\le
\|B_0\|_{L^\infty(\Omega)}+aT
\]
for all $t\in[0,T]$. Hence the solution remains uniformly bounded on
the entire time interval and cannot exhibit finite-time blow-up.

Finally, the smoothing property of the analytic semigroup implies that, whenever $B_0$ and $\partial\Omega$ possess sufficient regularity, the mild solution becomes classical for every $t>0$.
This completes the proof.
\end{proof}

\subsection{Spatially homogeneous equilibria and recovery threshold}

Assume that the harvesting effort is spatially constant,
\[
u(x,t)\equiv u_c.
\]
A spatially homogeneous equilibrium is a steady state of the form
\[
B(x,t)\equiv B^*,
\]
where $B^*$ is independent of space and time. Since $B^*$ is constant,
\[
\Delta B^*=0,
\]
and substituting into \eqref{eq:spatial_model} yields
\[
0
=
\frac{aB^*}{b+B^*}
-
(m+\alpha u_c)B^*.
\]

Factoring out $B^*$ gives
\[
B^*
\left(
\frac{a}{b+B^*}
-(m+\alpha u_c)
\right)
=0.
\]

Hence $B^*=0$ is always an equilibrium. A positive equilibrium satisfies
\[
\frac{a}{b+B^*}
=
m+\alpha u_c,
\]
and therefore
\[
B^*(u_c)
=
\frac{a}{m+\alpha u_c}
-b.
\]
This equilibrium exists precisely when
\[
B^*(u_c)>0,
\]
or equivalently,
\[
a>b(m+\alpha u_c).
\]
Solving for $u_c$ gives the critical harvesting threshold
\begin{equation*}
u_c
<
u_{\rm crit}
:=
\frac1\alpha
\left(
\frac ab-m
\right).
\end{equation*}

Thus the same recovery threshold appears as in the reduced biomass model.
The reason is that for homogeneous equilibria the diffusion term vanishes
identically and therefore does not affect the equilibrium relation.

\begin{proposition}[Linear stability of the positive homogeneous equilibrium]
Assume that $u(x,t)\equiv u_c$ and that a positive homogeneous equilibrium $B^*=B^*(u_c)>0$ exists. Then $B^*$ is linearly asymptotically stable under homogeneous Neumann boundary conditions.
Moreover, the zero equilibrium is linearly asymptotically stable when
$u_c>u_{\mathrm{crit}}$, where
\begin{equation}\label{eq:ucrit_spatial}
    u_{\mathrm{crit}}
=
\frac{1}{\alpha}
\left(\frac{a}{b}-m\right).
\end{equation}

\end{proposition}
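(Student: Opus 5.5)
We linearize \eqref{eq:spatial_model} about a homogeneous equilibrium and decompose the perturbation in the eigenbasis of the Neumann Laplacian.

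\textbf{Linearization.} With $u\equiv u_c$, write
\[
f(B)=\frac{aB}{b+B}-(m+\alpha u_c)B,
\qquad
f'(B)=\frac{ab}{(b+B)^2}-(m+\alpha u_c).
\]
For an equilibrium $\bar B\in\{0,B^*\}$ we set $B=\bar B+w$ and keep linear terms. This gives
\[
\partial_t w=D\Delta w+f'(\bar B)\,w
\quad\text{in }\Omega,
\qquad
\partial_\nu w=0
\quad\text{on }\partial\Omega.
\]
Let $0=\mu_0<\mu_1\le\mu_2\le\cdots$ be the Neumann eigenvalues of $-\Delta$ on $\Omega$, with $L^2$-orthonormal eigenfunctions $\phi_k$. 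These exist because $\Omega$ is bounded and smooth.

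\textbf{Mode decomposition.} Expanding $w=\sum_k w_k(t)\phi_k$ decouples the linear problem into
\[
w_k'=\big(f'(\bar B)-D\mu_k\big)w_k,
\]
so the spectrum of the linearized operator is $\{f'(\bar B)-D\mu_k\}_{k\ge0}$. Since $D\mu_k\ge0$, every growth rate is bounded above by $f'(\bar B)$. Stability therefore reduces to checking $f'(\bar B)<0$. In that case Parseval gives
\[
\|w(t)\|_{L^2}\le e^{f'(\bar B)t}\|w(0)\|_{L^2},
\]
which is linear asymptotic stability. Diffusion only enlarges the decay rate of the nonconstant modes.

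\textbf{Positive equilibrium.} From the equilibrium relation $\frac{a}{b+B^*}=m+\alpha u_c$ we get
\[
f'(B^*)=\frac{ab}{(b+B^*)^2}-\frac{a}{b+B^*}
=\frac{a}{b+B^*}\Big(\frac{b}{b+B^*}-1\Big)
=-\frac{aB^*}{(b+B^*)^2}.
\]
This is strictly negative because $B^*>0$.

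\textbf{Zero equilibrium.} Here
\[
f'(0)=\frac{a}{b}-m-\alpha u_c=\alpha\,(u_{\mathrm{crit}}-u_c),
\]
which is negative exactly when $u_c>u_{\mathrm{crit}}$ with $u_{\mathrm{crit}}$ as in \eqref{eq:ucrit_spatial}.

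\textbf{Main obstacle.} The algebra above is elementary. The step needing care is making the reduction to the modal equations rigorous: we must justify that the linearized operator $D\Delta+f'(\bar B)$ with Neumann conditions is self-adjoint on $L^2(\Omega)$, that it has compact resolvent and hence discrete spectrum, and that its spectral bound equals $f'(\bar B)$ (attained by the constant mode $k=0$). We would take this from the same semigroup setting used in the existence theorem. The uniform decay estimate then follows from the spectral theorem, with no further compactness argument required.
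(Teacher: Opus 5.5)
Your proposal is correct and follows the paper's proof essentially step for step: you linearize, expand in Neumann eigenmodes to get growth rates $f'(\bar B)-D\mu_k\le f'(\bar B)$, and compute $f'(B^*)=-aB^*/(b+B^*)^2$, which equals the paper's $-qB^*/(b+B^*)$ with $q=a/(b+B^*)$, together with $f'(0)=\alpha(u_{\mathrm{crit}}-u_c)$. Your Parseval bound $\|w(t)\|_{L^2}\le e^{f'(\bar B)t}\|w(0)\|_{L^2}$ and the remarks on self-adjointness and compact resolvent are a modest tightening that the paper leaves implicit.
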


\begin{proof}
For a spatially homogeneous equilibrium under the constant control
$u_c$, we have
\[
f(B^*)=0.
\]
Since $B^*>0$, it follows that
\[
\frac{a}{b+B^*}
=
m+\alpha u_c
=:q.
\]
Differentiating $f$ gives
\[
f'(B)
=
\frac{ab}{(b+B)^2}-q.
\]
Evaluating at $B^*$ and using
\[
\frac{a}{b+B^*}=q,
\]
we obtain
\[
\begin{aligned}
f'(B^*)
&=
\frac{ab}{(b+B^*)^2}-q\\
&=
q\left(
\frac{b}{b+B^*}-1
\right)\\
&=
-q\frac{B^*}{b+B^*}.
\end{aligned}
\]
Since $q>0$, $B^*>0$, and $b+B^*>0$, we conclude that
\[
f'(B^*)<0.
\]

To study the spatial stability, consider a small perturbation of the equilibrium,
\[
B(x,t)=B^*+z(x,t).
\]
Since $B^*$ is spatially homogeneous,
\[
\Delta B^*=0,
\qquad
f(B^*)=0.
\]
Substituting $B=B^*+z$ into the spatial model gives
\[
\partial_t z
=
D\Delta z+f(B^*+z).
\]
Using the first-order Taylor expansion
\[
f(B^*+z)
=
f(B^*)+f'(B^*)z+O(z^2),
\]
and neglecting the higher-order terms, we obtain the linearized problem
\[
\partial_t z
=
D\Delta z+f'(B^*)z,
\qquad
\partial_\nu z=0.
\]

We expand the perturbation in eigenfunctions of the Neumann Laplacian.
Let $\{\phi_k\}_{k\ge0}$ satisfy
\[
-\Delta\phi_k=\mu_k\phi_k,
\qquad
\partial_\nu\phi_k=0,
\]
with eigenvalues
\[
0=\mu_0\le\mu_1\le\mu_2\le\cdots.
\]
Since the eigenfunctions form an orthogonal basis of $L^2(\Omega)$, the perturbation can be represented as
\[
z(x,t)
=
\sum_{k=0}^{\infty}c_k(t)\phi_k(x).
\]
Substituting this expansion into the linearized equation and using
\[
\Delta\phi_k=-\mu_k\phi_k
\]
yields
\[
c_k'(t)
=
\bigl(f'(B^*)-D\mu_k\bigr)c_k(t).
\]
Hence, the growth rate associated with the $k$th spatial mode is
\[
\lambda_k
=
f'(B^*)-D\mu_k.
\]
Since $\mu_k\ge0$ and $D\ge0$,
\[
\lambda_k
\le
f'(B^*)
<0.
\]
Therefore, every spatial perturbation mode decays exponentially in time, and
\[
z(x,t)\to0
\qquad\text{as }t\to\infty.
\]
Thus, the positive homogeneous equilibrium $B^*$ is linearly asymptotically stable under homogeneous Neumann boundary conditions.

We next consider the zero equilibrium $B=0$, which satisfies $f(0)=0$. The corresponding linearized growth rate is
\[
f'(0)
=
\frac{a}{b}-m-\alpha u_c.
\]
We define the critical harvesting level by Equation \eqref{eq:ucrit_spatial}.
Then
\[
f'(0)
=
\alpha
\left(
u_{\mathrm{crit}}-u_c
\right).
\]

If $u_c>u_{\mathrm{crit}}$, then we get
$f'(0)<0$.
For the linearized spatial problem around $B=0$, the modal growth rates are
\[
f'(0)-D\mu_k.
\]
Since $\mu_k\ge0$, all these growth rates are negative. Hence the zero equilibrium is linearly asymptotically stable.

If $u_c<u_{\mathrm{crit}}$,
then we obtain $f'(0)>0$,
so the spatially homogeneous mode corresponding to $\mu_0=0$ grows
exponentially. Thus, the zero equilibrium is linearly unstable. In this regime, the positive homogeneous equilibrium exists and, as shown above, is linearly asymptotically stable.

At the critical value $u_c=u_{\mathrm{crit}}$,
we have $f'(0)=0$, so linearization alone does not determine the stability of the zero
equilibrium.

Therefore, $u_{\mathrm{crit}}$ identifies the transition between the persistence and extinction regimes of the spatially homogeneous equilibria.
\end{proof}

\section{Distributed Optimal Harvesting Problem}\label{sec:disOptimal}

We now formulate an optimal harvesting problem for the spatially distributed biomass model. In contrast to the reduced model, the harvesting effort may vary
both in space and time and is therefore represented by a distributed control
\[
u=u(x,t).
\]
This allows different harvesting intensities at different locations of the habitat.

To account for the economic cost of harvesting, we introduce a quadratic penalty on the control. Such a term is biologically reasonable since increased
fishing effort requires additional resources, and it is mathematically useful because it leads to a convex control cost and a stable characterization of the optimal control.

The admissible control set is defined by
\[
U_{\mathrm{ad}}
=
\left\{
u\in L^\infty((0,T)\times\Omega):
0\le u(x,t)\le u_{\max}
\ \text{a.e.}
\right\}.
\]

For $\gamma>0$, we consider the objective functional
\begin{equation}\label{eq:objective_spatial}
J(u)
=
\int_0^T
\int_\Omega
\left(
\alpha u(x,t)B(x,t)
-
\frac{\gamma}{2}u(x,t)^2
\right)
\,dx\,dt.
\end{equation}

The first term represents the harvested biomass, while the second term measures the cost associated with harvesting effort. The optimization problem is
\begin{equation}\label{eq:optimal_control_spatial}
\max_{u\in U_{\mathrm{ad}}}
J(u)
\quad
\text{subject to}
\quad
\eqref{eq:spatial_model}
-
\eqref{eq:initial_spatial}.
\end{equation}

For every admissible harvesting strategy $u\in U_{\mathrm{ad}}$, the state
equation determines a corresponding biomass distribution $B(x,t)$. Therefore the control cannot be chosen independently of the state dynamics, and the
optimization is carried out over all admissible spatio--temporal harvesting policies.
\newpage
\begin{theorem}[Existence of an optimal distributed control]
\label{the:optimal}

Assume that $B_0\in L^\infty(\Omega)$, $B_0\ge0$, and
$U_{\mathrm{ad}}\neq\emptyset$. Then there exists at least one control $u^*\in U_{\mathrm{ad}}$ maximizing \eqref{eq:objective_spatial} subject to
the state equation \eqref{eq:spatial_model}.

\end{theorem}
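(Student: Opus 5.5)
We argue by the direct method of the calculus of variations. First, $J$ is bounded above on $U_{\mathrm{ad}}$. By Theorem~\ref{thm:positivity}, every state satisfies $0\le B\le R:=\|B_0\|_{L^\infty(\Omega)}+aT$, so
\[
J(u)\le \alpha u_{\max} R\,T|\Omega|<\infty .
\]
Hence $J^*:=\sup_{U_{\mathrm{ad}}}J$ is finite, and we may choose a maximizing sequence $u_n\in U_{\mathrm{ad}}$ with $J(u_n)\to J^*$. The set $U_{\mathrm{ad}}$ is bounded, convex and closed in $L^2(\Omega\times(0,T))$, hence weakly closed. Passing to a subsequence, $u_n\rightharpoonup u^*$ weakly in $L^2$ (even weakly-$*$ in $L^\infty$), and $u^*\in U_{\mathrm{ad}}$.

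Next we need compactness of the states $B_n:=B(u_n)$. They are uniformly bounded in $L^\infty$ by $R$. Testing \eqref{eq:spatial_model} with $B_n$ and using that the reaction term is bounded by a constant times $|B_n|$ gives a uniform bound for $B_n$ in $L^2(0,T;H^1(\Omega))$. The equation then bounds $\partial_t B_n$ uniformly in $L^2(0,T;H^1(\Omega)^*)$. By the Aubin--Lions lemma, a further subsequence satisfies $B_n\to B^*$ strongly in $L^2(\Omega\times(0,T))$, weakly in $L^2(0,T;H^1)$, and, after extraction, a.e. The bounds $0\le B^*\le R$ are inherited.

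The main obstacle is the passage to the limit in the weak formulation, specifically in the bilinear term $u_nB_n$. Since $u_n\rightharpoonup u^*$ weakly and $B_n\to B^*$ strongly in $L^2$, the products converge weakly: $u_nB_n\rightharpoonup u^*B^*$ in $L^1$, which suffices when tested against smooth test functions. The Beverton--Holt term $aB_n/(b+B_n)$ is globally Lipschitz on $[0,R]$, so it converges strongly. The linear terms pass to the limit by weak convergence, and $B_n(0)=B_0$ is preserved. Hence $B^*$ is a weak solution corresponding to $u^*$. By the uniqueness part of the existence-and-uniqueness theorem (the Lipschitz truncation argument applies equally to weak solutions), $B^*=B(u^*)$.

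Finally we verify that $J$ is weakly upper semicontinuous along this sequence. The harvest term converges, $\int\!\!\int \alpha u_nB_n\to\int\!\!\int\alpha u^*B^*$, again by the weak--strong product argument. The cost $u\mapsto\frac{\gamma}{2}\|u\|_{L^2}^2$ is convex and continuous, hence weakly lower semicontinuous, so
\[
\liminf_n \frac{\gamma}{2}\|u_n\|^2\ge\frac{\gamma}{2}\|u^*\|^2 .
\]
Combining the two,
\[
J(u^*)\ge\limsup_n J(u_n)=J^*,
\]
so $u^*$ is optimal.
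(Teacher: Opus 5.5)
Your proposal is correct and follows essentially the same route as the paper's proof. Both arguments use a maximizing sequence, weak-$*$ (equivalently weak $L^2$) compactness of the controls with admissibility of the limit, uniform $L^\infty$, energy and $\partial_t B_n$ bounds feeding Aubin--Lions for strong $L^2$ convergence of the states, the weak--strong limit of $u_nB_n$, and weak lower semicontinuity of the quadratic cost. Your two additions, checking that $\sup_{U_{\mathrm{ad}}}J<\infty$ and invoking uniqueness to identify the limit state as $B^*=B(u^*)$, make explicit points the paper leaves implicit.
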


\begin{proof}

Let $(u_n)$ be a maximizing sequence in $U_{\mathrm{ad}}$, namely
\[
J(u_n)
\longrightarrow
\sup_{u\in U_{\mathrm{ad}}} J(u).
\]

Since
\[
0\le u_n(x,t)\le u_{\max}
\qquad\text{a.e. in }\Omega\times(0,T),
\]
the sequence $(u_n)$ is uniformly bounded in
$L^\infty((0,T)\times\Omega)$.

By the Banach--Alaoglu theorem, there exist a subsequence, still denoted by
$(u_n)$, and a function $u^*\in L^\infty((0,T)\times\Omega)$ such that
\[
u_n \rightharpoonup^* u^*
\qquad\text{in }L^\infty((0,T)\times\Omega).
\]
Here, $\rightharpoonup^*$ denotes weak-* convergence in
$L^\infty((0,T)\times\Omega)$.

We now verify that $u^*\in U_{\mathrm{ad}}$. Let
$\varphi\in L^1((0,T)\times\Omega)$ be nonnegative. Since
$u_n\ge0$ almost everywhere,
\[
\int_0^T\int_\Omega u_n\varphi\,dx\,dt\ge0.
\]
Passing to the weak-* limit gives
\[
\int_0^T\int_\Omega u^*\varphi\,dx\,dt\ge0.
\]
Hence $u^*\ge0$ almost everywhere. Similarly, since
$u_{\max}-u_n\ge0$ almost everywhere,
\[
\int_0^T\int_\Omega
(u_{\max}-u_n)\varphi\,dx\,dt\ge0.
\]
Passing to the limit yields
\[
\int_0^T\int_\Omega
(u_{\max}-u^*)\varphi\,dx\,dt\ge0.
\]
Therefore $u^*\le u_{\max}$ almost everywhere. Thus
\[
0\le u^*\le u_{\max}
\qquad\text{a.e. in }\Omega\times(0,T),
\]
and consequently $u^*\in U_{\mathrm{ad}}$.

For each $u_n$, let $B_n$ be the corresponding state solution of
\eqref{eq:spatial_model}. By Theorem~\ref{thm:positivity},
\[
0\le B_n(x,t)\le C
\]
for some constant $C>0$ independent of $n$. Consequently,
\[
\|B_n\|_{L^\infty((0,T)\times\Omega)}
\le C.
\]

We define
\[
F_n
=
\frac{aB_n}{b+B_n}
-\alpha u_nB_n
-mB_n .
\]

Multiplying \eqref{eq:spatial_model} by $B_n$ and integrating over $\Omega$, we obtain
\[
\frac12\frac{d}{dt}
\|B_n(t)\|_{L^2(\Omega)}^2
+
D\|\nabla B_n(t)\|_{L^2(\Omega)}^2
=
\int_\Omega F_n B_n\,dx.
\]

Since $(u_n)$ and $(B_n)$ are uniformly bounded in
$L^\infty((0,T)\times\Omega)$, there exists a constant $C>0$ such that
\[
|F_n|\le C.
\]
Hence
\[
\left|
\int_\Omega F_n B_n\,dx
\right|
\le C|\Omega|.
\]

Integrating with respect to time over $(0,T)$ yields
\[
\sup_{t\in[0,T]}
\|B_n(t)\|_{L^2(\Omega)}^2
+
D\int_0^T
\|\nabla B_n(t)\|_{L^2(\Omega)}^2\,dt
\le C,
\]
where $C$ is independent of $n$.

Therefore
\[
(B_n)
\quad\text{is bounded in}\quad
L^2(0,T;H^1(\Omega)).
\]

Furthermore, for every $\varphi\in H^1(\Omega)$,
Green's formula and the homogeneous Neumann boundary condition imply
\[
\langle \Delta B_n,\varphi\rangle
=
-\int_\Omega
\nabla B_n\cdot\nabla\varphi\,dx.
\]

Hence
\[
|\langle \Delta B_n,\varphi\rangle|
\le
\|\nabla B_n\|_{L^2(\Omega)}
\|\nabla\varphi\|_{L^2(\Omega)},
\]
which gives
\[
\|\Delta B_n\|_{H^{-1}(\Omega)}
\le
\|\nabla B_n\|_{L^2(\Omega)}.
\]

Consequently,
\[
(\Delta B_n)
\quad\text{is bounded in}\quad
L^2(0,T;H^{-1}(\Omega)).
\]

Since $(F_n)$ is uniformly bounded in
$L^\infty((0,T)\times\Omega)$, it is bounded in
$L^2(0,T;H^{-1}(\Omega))$. From
\[
\partial_t B_n
=
D\Delta B_n
+
F_n,
\]
it follows that $(\partial_t B_n)$ is bounded in
$L^2(0,T;H^{-1}(\Omega))$.

The Aubin--Lions compactness lemma therefore yields the existence of a
subsequence, still denoted by $(B_n)$, and a function $B^*$ such that
\[
B_n \to B^*
\qquad\text{strongly in }
L^2((0,T)\times\Omega).
\]

Moreover, up to a further subsequence,
\[
B_n(x,t)\to B^*(x,t)
\qquad\text{a.e. in }\Omega\times(0,T).
\]

Since
\[
u_n \rightharpoonup^* u^*
\quad\text{in }L^\infty((0,T)\times\Omega)
\]
and
\[
B_n \to B^*
\quad\text{strongly in }L^2((0,T)\times\Omega),
\]
it follows that
\[
u_n B_n
\rightharpoonup
u^*B^*
\qquad\text{weakly in }L^2((0,T)\times\Omega).
\]

Furthermore, the function $B\mapsto \frac{aB}{b+B}$
is continuous on the bounded interval
$0\le B\le C$,
and therefore
\[
\frac{aB_n}{b+B_n}
\to
\frac{aB^*}{b+B^*}
\qquad\text{strongly in }L^2((0,T)\times\Omega).
\]

Passing to the limit in the weak formulation of
\eqref{eq:spatial_model}, we conclude that $B^*$ satisfies the state equation associated with the control $u^*$. Hence $(u^*,B^*)$ is an admissible control--state pair.

It remains to show that $u^*$ attains the maximal value of the objective functional $J$. To this end, we use the convergence properties established above together with the weak lower semicontinuity of the $L^2$-norm.

Moreover, since $u_n\rightharpoonup^* u^*$ weakly in
$L^\infty((0,T)\times\Omega)$, it also follows that
$u_n\rightharpoonup u^*$ weakly in
$L^2((0,T)\times\Omega)$. Thus, the weak lower semicontinuity of the
$L^2$-norm yields
\[
\|u^*\|_{L^2((0,T)\times\Omega)}^2
\le
\liminf_{n\to\infty}
\|u_n\|_{L^2((0,T)\times\Omega)}^2.
\]

Hence,
\[
-\frac{\gamma}{2}
\|u^*\|_{L^2((0,T)\times\Omega)}^2
\ge
\limsup_{n\to\infty}
\left(
-\frac{\gamma}{2}
\|u_n\|_{L^2((0,T)\times\Omega)}^2
\right).
\]

Combining the convergence of the benefit term with the above estimate for the harvesting-cost term, we obtain
\[
J(u^*)
\ge
\limsup_{n\to\infty} J(u_n).
\]

Since $(u_n)$ is a maximizing sequence,
\[
\limsup_{n\to\infty} J(u_n)
=
\sup_{u\in U_{\mathrm{ad}}} J(u).
\]

It follows that
\[
J(u^*)
=
\sup_{u\in U_{\mathrm{ad}}} J(u),
\]
and $u^*$ is an optimal distributed harvesting control.

\end{proof}

\subsection{Adjoint Equation and Optimality Condition}\label{sec:adjAndOpt}

We now derive the optimality system associated with the distributed harvesting problem. Let $(u^*,B^*)$ be an optimal control--state pair obtained in Theorem~\ref{the:optimal}.

The Hamiltonian density is
\begin{equation}\label{eq:H_pde}
\mathcal H(B,u,p)
=
\alpha uB
-
\frac{\gamma}{2}u^2
+
p
\left(
D\Delta B
+
\frac{aB}{b+B}
-
\alpha uB
-
mB
\right).
\end{equation}

Analogously to the reduced model, the adjoint variable satisfies
\[
-\partial_t p
=
\frac{\partial \mathcal H}{\partial B},
\]
where the diffusion term is transferred from $B$ to $p$ by integration by parts. Consequently, the adjoint equation is
\begin{equation}\label{eq:adjoint_pde}
-\partial_t p
-
D\Delta p
=
\frac{ab}{(b+B^*)^2}p
-
mp
+
\alpha u^*(1-p)
\qquad \text{in } \Omega\times(0,T),
\end{equation}
with terminal condition
\begin{equation}\label{eq:adjoint_terminal_pde}
p(x,T)=0,
\qquad x\in\Omega,
\end{equation}
and homogeneous Neumann boundary condition
\begin{equation}\label{eq:adjoint_bc_pde}
\partial_\nu p=0
\qquad \text{on }\partial\Omega\times(0,T).
\end{equation}

Differentiation of \eqref{eq:H_pde} with respect to the control variable gives
\begin{equation}\label{eq:hamiltonian_derivative}
\frac{\partial\mathcal H}{\partial u}
=
\alpha B
-
\gamma u
-
\alpha Bp
=
\alpha B(1-p)-\gamma u.
\end{equation}

Hence the unconstrained maximizer is
\begin{equation}\label{eq:unew}
    u=\frac{\alpha B(1-p)}{\gamma}.
\end{equation}

Since admissible controls satisfy
\[
0\le u\le u_{\max},
\]
the optimal control is characterized by
\begin{equation}\label{eq:optimal_control_pde}
u^*(x,t)
=
\operatorname{Proj}_{[0,u_{\max}]}
\left(
\frac{\alpha B^*(x,t)\bigl(1-p(x,t)\bigr)}
{\gamma}
\right).
\end{equation}

Formula \eqref{eq:optimal_control_pde} shows that harvesting is increased in regions where biomass state is high and the adjoint value is low. Conversely, harvesting is reduced in regions where the adjoint variable is large, indicating that the biomass has a high marginal future value. Thus the adjoint variable identifies regions where biomass should be conserved because of its contribution to future stock
development.

\section{Numerical Strategy}\label{sec:numericalSt}

The purpose of the numerical experiments is to illustrate the spatial dynamics of the biomass distribution and to compare different harvesting policies. 
In addition to the optimal control derived in Section~\ref{sec:adjAndOpt}, we consider a data-driven harvesting strategy based on
observational information; see Figure~2 in \citealp{slu_aqua2025}.

\subsection{Harvesting Strategies}\label{sec:harvest}
To illustrate the harvesting mechanisms considered in the numerical study, we introduce the harvesting policies that will be used throughout the simulations. These include the data-driven harvesting policy derived from observational information and the harvesting policy obtained from the optimal control framework.

The numerical results below are obtained by solving the state--adjoint system and the associated first-order optimality condition. Thus, the computed control represents a numerical solution of the optimality system.

The data-driven harvesting policy introduced in the reduced-model analysis is represented by
\begin{equation}\label{eq:uniform_control}
u(x,t)=u_{\mathrm{obs}}(t),
\end{equation}
where $u_{\mathrm{obs}}(t)$ denotes the harvesting effort reconstructed
from the observational data.

The dataset used in this study consists of spawning stock biomass (SSB) observations. Spawning stock biomass denotes the total biomass of mature individuals contributing to reproduction and is reported in tonnes. The available records cover the period 1974--2025 (see Figure~2 in \citealp{slu_aqua2025}).

The optimal control framework developed in this work seeks a harvesting policy that balances harvesting benefits with the long-term value of the biomass stock. The resulting harvesting policy is characterized by the Equation \eqref{eq:optimal_control_pde}.
Here, the adjoint variable $p$ measures the marginal future value of biomass and therefore influences whether harvesting should be increased or reduced at a given location and time.

The numerical experiments compare the biomass dynamics associated with the data-driven harvesting policy and the model-based optimal harvesting policy. Particular attention is given to the role of the adjoint variable in identifying regions where biomass conservation may
yield greater long-term benefits than immediate harvesting.

\subsection{Numerical Implementation and Diagnostic Quantities}

The numerical study is organized in two stages. First, the reduced model is compared with the available spawning stock biomass (SSB) observations to assess its ability to reproduce the observed biomass
dynamics. Subsequently, the spatially distributed model is used to investigate the effects of diffusion, harvesting policies, and optimal control. The harvesting policies introduced above are employed throughout the simulations via
\eqref{eq:uniform_control} and
\eqref{eq:optimal_control_pde} in Section~\ref{sec:harvest}.

\subsubsection{Biomass and harvesting normalization.}
The SSB observations used for the model comparison are expressed relative to the reference spawning biomass. Accordingly, biomass is represented
on a normalized scale and is denoted by
\[
B(t)
=
\frac{B_{\mathrm{phys}}(t)}{B_{\mathrm{ref}}},
\]
where \(B_{\mathrm{phys}}(t)\) is the corresponding physical biomass and \(B_{\mathrm{ref}}\) is the reference biomass used for normalization. 
The harvesting input is likewise expressed relative to the reference fishing mortality. Specifically, the harvesting control is defined as
\begin{equation}\label{eq:uData}
u_{\mathrm{obs}}(t)=\frac{F(t)}{F_{\mathrm{ref}}},
\end{equation}
where $F_{\mathrm{ref}}$ is the reference fishing mortality used to normalize the observed fishing mortality $F(t)$. 

The normalization of biomass and harvesting effort is used consistently in the numerical implementation. The units and interpretations of the parameters used in the simulations are summarized in Table~\ref{tab:Reducedparameters} and Table~\ref{tab:Spatialparameters}.

For the spatial simulations, we consider the one-dimensional habitat
$\Omega=(0,L)$ and discretize it using a finite difference method with spatial nodes $x_i=i\Delta x$, $i=0,\ldots,N$, and time levels $t_k=k\Delta t$. The homogeneous Neumann boundary conditions correspond to zero biomass flux at the habitat boundaries, ensuring that dispersal occurs only within the domain.

The state equation is solved forward in time, whereas the adjoint equation is solved backward from the terminal condition
$p(x,T)=0$. A semi-implicit discretization of the state equation is given by
\begin{equation}\label{eq:numerical_state}
\frac{B^{k+1}-B^k}{\Delta t}
=
D\Delta_h B^{k+1}
+
\frac{aB^k}{b+B^k}
-
\alpha u^k B^k
-
mB^k.
\end{equation}

The adjoint equation is discretized backward in time as
\begin{equation}\label{eq:numerical_adjoint}
-\frac{p^{k+1}-p^k}{\Delta t}
=
D\Delta_h p^k
+
\left(
\frac{ab}{(b+B^k)^2}
-
\alpha u^k
-
m
\right)p^k
+
\alpha u^k.
\end{equation}

In addition to biomass dynamics, the adjoint variable is used as a diagnostic quantity. While the biomass distribution $B(x,t)$ describes
the current state of the population, the adjoint variable $p(x,t)$ quantifies the marginal future value of biomass. Consequently, regions with large values of $p$ correspond to locations where biomass is particularly important for future system performance, whereas smaller
values indicate regions where harvesting has a weaker long-term impact.

\section{Numerical Simulations}\label{sec:numericalSi}

\subsection{Comparison with Observed Data}

The reduced model~\eqref{eq:reduced_model} is compared with the
available spawning stock biomass (SSB) observations to assess its ability
to reproduce the main temporal patterns of the observed biomass
dynamics. The harvesting input $u_{\mathrm{obs}}(t)$ is reconstructed from
the observed fishing pressure data and incorporated directly into the
model dynamics. 
The resulting $u_{\mathrm{obs}}(t)$ is then used as a prescribed input
to the reduced biomass model.

Model performance is assessed by comparing the simulated biomass
trajectory with the corresponding observed SSB data. As a quantitative
measure of agreement, we compute the root-mean-square error (RMSE),
\[
\mathrm{RMSE}
=
\left(
\frac{1}{N}
\sum_{i=1}^{N}
\left(
B(t_i)-B_i^{\mathrm{obs}}
\right)^2
\right)^{1/2},
\]
where $B(t_i)$ denotes the model prediction and
$B_i^{\mathrm{obs}}$ the corresponding observed biomass value at time
$t_i$.

The model parameters used in this comparison are summarized in
Table~\ref{tab:Reducedparameters}. These values are taken as
representative biological and harvesting parameters and are not estimated
through an optimization procedure using the observed biomass data.

\begin{table}[h]
\centering
\begin{tabular}{llll}
\toprule
Parameter & Meaning & Representative & Unit\\&&  value or range & \\
\midrule
$a$ & Maximum recruitment rate & $1.55$--$1.57$ & day$^{-1}$ \\
$b$ & Normalized saturation  & $10$--$10.068$ & dimensionless \\&parameter&&\\
$m$ & Natural mortality rate & $0.136$ & day$^{-1}$ \\
$\alpha$ & Harvesting-rate scaling  & $1$--$1.2$ & day$^{-1}$ \\&coefficient&&\\
$u_{\max}$ & Maximum admissible & $0.006$ & dimensionless \\&harvesting effort &&\\
\bottomrule
\end{tabular}
\caption{Representative model parameters used in the comparison with
observed biomass data. Biomass $B$ and harvesting effort $u$ are
expressed on normalized, dimensionless scales.}
\label{tab:Reducedparameters}
\end{table}

The purpose of this comparison is not to obtain an exact fit to the
observations, but rather to examine whether the reduced model can
reproduce the main temporal trends of the observed biomass dynamics
under the observed harvesting pressure. The resulting comparison
provides a data-informed assessment of the reduced model and motivates
its subsequent use in the spatial simulations presented below. The resulting RMSE over the analyzed 1974-2025 comparison period is \(\mathrm{RMSE}=0.28233\), indicating a moderate discrepancy between the simulated and observed biomass trajectories.

\begin{figure}[H]\label{fig:reduce}
\centering
\includegraphics[width=\textwidth]{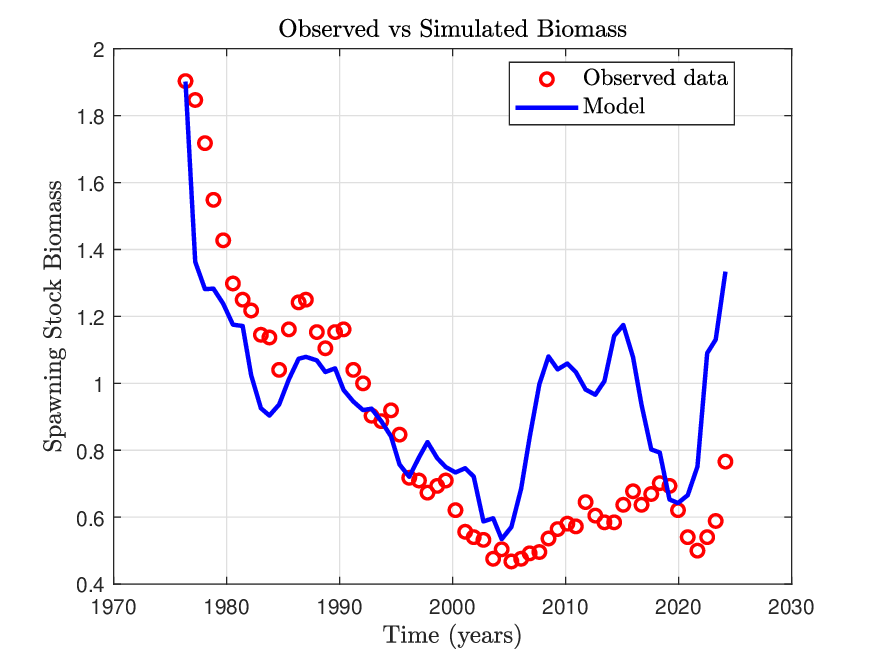}
\caption{Biomass dynamics obtained from the model (solid line) compared with observed data (markers). The observed data correspond to spawning stock biomass (SSB), defined as the total weight of mature individuals in the population and measured in tonnes over the reported time period 1974--2025 (see Figure~2 in the reference study \citep{slu_aqua2025}). The model curves are computed using parameter values within the ranges given in Table~\ref{tab:Reducedparameters}, illustrating the agreement obtained using representative parameter
values.}
\end{figure}

The reduced-model comparison provides the empirical motivation for the
subsequent spatial analysis. In particular, it establishes a biomass-based
description against which the effects of spatial movement and the future
management value of biomass can be examined.

\subsection{Spatially Distributed Optimal Harvesting}

The spatial model~\eqref{eq:spatial_model} is solved on a
one-dimensional domain $\Omega=(0,L)$ with an initially heterogeneous biomass distribution,
\[
B(x,0)
=
B_0
+
\varepsilon
\sin\!\left(\frac{2\pi x}{L}\right).
\]

The state--adjoint system is solved using a forward--backward sweep method.
The control is updated iteratively during these forward--backward sweeps.
The iteration is initialized with
\[
u^{(0)}(x,t)=0.1,
\]
which serves as the initial control over the space--time domain.

At iteration $k$, the state equation is first solved forward in time using
the current control $u^{(k)}(x,t)$, followed by the adjoint equation
backward in time. Based on the resulting state and adjoint variables, a
new control is then computed from the projection formula,
\[
u_{\mathrm{proj}}^{(k+1)}(x,t)
=
\operatorname{Proj}_{[0,u_{\max}]}
\left(
\frac{\alpha B^{(k)}(x,t)
\left(1-p^{(k)}(x,t)\right)}{\gamma}
\right).
\]
This is the new, unrelaxed control obtained directly from the optimality
condition. It is then combined with the current control $u^{(k)}$ through
the relaxation step,
\[
u^{(k+1)}(x,t)
=
r\,u^{(k)}(x,t)
+
(1-r)\,u_{\mathrm{proj}}^{(k+1)}(x,t),
\]
where $r\in[0,1]$ is the relaxation parameter. Thus, the relaxed control
$u^{(k+1)}$ becomes the current control in the next forward--backward
sweep, and the same projection formula is applied again using the newly
computed state and adjoint variables. In the numerical experiments,
$r=0.99$, so that 99\% of the current control is retained and 1\% of the
newly computed projected control is incorporated into the next iterate.

Convergence was monitored using the maximum pointwise change between two
successive relaxed control iterates,
\begin{equation}\label{eq:uConverge}
E_k =
\max_{x,t}
\left|u^{(k)}(x,t)-u^{(k-1)}(x,t)\right|.
\end{equation}
The iteration was stopped when
\begin{equation}\label{eq:uStop}
E_k < 10^{-6},
\end{equation}
or when the maximum number of 1000 iterations was reached.

The parameter values used in the spatial optimal-control simulation are
summarized in Table~\ref{tab:Spatialparameters}. The forward--backward
iteration converged after 652 iterations, with
\[
E_{652}=9.894185\times10^{-7}.
\]

The convergence behavior and the resulting spatial--temporal
distributions of the biomass, adjoint variable, and optimal harvesting
control are examined in the following subsection.

\begin{center}
\begin{table}[H]
\centering
\begin{tabular}{llll}
\toprule
Parameter & Meaning & Representative & Unit\\
&& value or range&  \\
\midrule
$a$ & Maximum recruitment rate & $1.55$ & day$^{-1}$ \\
$b$ & Normalized saturation & $10$ & dimensionless \\& parameter&&\\
$m$ & Natural mortality rate & $0.136$ & day$^{-1}$ \\
$\alpha$ & Harvesting-rate scaling & $1$ & day$^{-1}$ \\& coefficient&&\\
$u_0$ & Feedback harvesting scale & $0.1$ & dimensionless \\
$\gamma$ & Harvesting-cost coefficient & 3 & day$^{-1}$ \\
$u_{\max}$ & Maximum admissible & 0.2 & dimensionless \\& harvesting effort&&\\
$L$ & Spatial domain length &10 &spatial model  \\&&& unit\\
$D$ & Spatial diffusion coefficient & 0.01 & spatial unit$^2$/day \\
$T$ & Simulation time & 540 & days \\
$B_0$ & Baseline initial biomass & 1.2 & dimensionless \\
$\varepsilon$ & Initial heterogeneity &0.2 &- \\
\bottomrule
\end{tabular}
\caption{Representative model parameters, their interpretation, and units used in the numerical implementation. Biomass $B$ and harvesting effort $u$ are expressed on normalized, dimensionless scales.}
\label{tab:Spatialparameters}
\end{table}
\end{center}
\subsubsection{Numerical convergence and spatial--temporal structure of the optimal solution}
\label{subsec:strong_convergence}

The convergence of the forward--backward sweep method is assessed using
the maximum change in the optimal harvesting control between two
successive iterations, as defined in Equation~\eqref{eq:uConverge}.
For a relaxation parameter of $r=0.99$, the convergence error decreases
strongly as the iterations proceed, indicating that the numerical
optimal control approaches a stable solution. The small final
iteration error demonstrates that further updates to the harvesting
control become negligible, providing numerical evidence of convergence of the iterative scheme of the forward--backward sweep procedure.

To further examine the structure of the converged solution, the
spatial--temporal distributions of the biomass $B(x,t)$, adjoint
variable $p(x,t)$, and optimal harvesting control $u^*(x,t)$ are shown
in Figure~\ref{fig:three_figures}(a)--(c), respectively. The three
surfaces provide a complementary view of the converged optimality
system and illustrate how the spatial heterogeneity evolves over the
time horizon.

\begin{figure}[H]
    \centering

    \begin{minipage}[b]{0.49\textwidth}
        \centering
        \includegraphics[width=\textwidth]{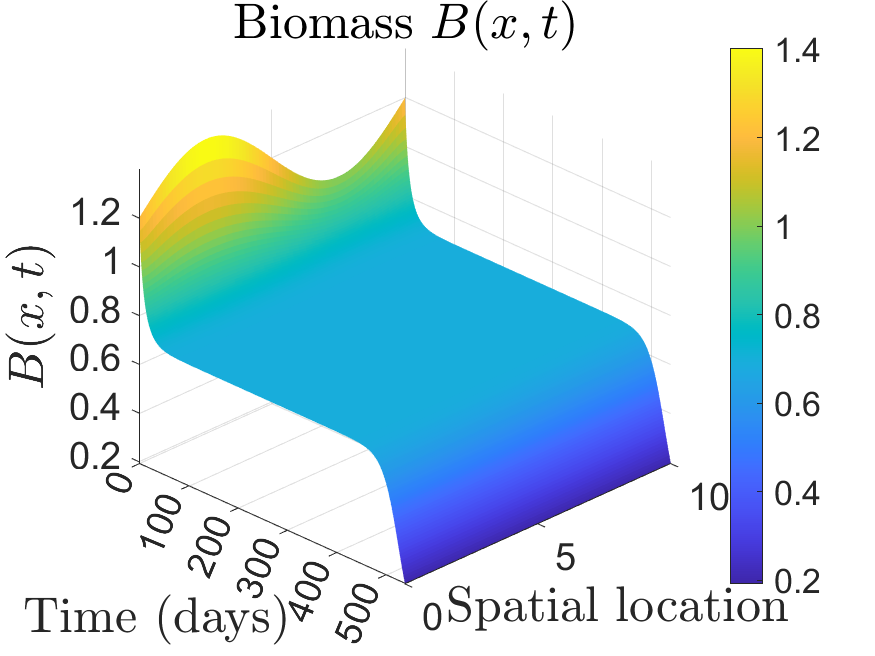}
        \vspace{2mm}
        
        (a)
    \end{minipage}
    \hfill
    \begin{minipage}[b]{0.49\textwidth}
        \centering
        \includegraphics[width=\textwidth]{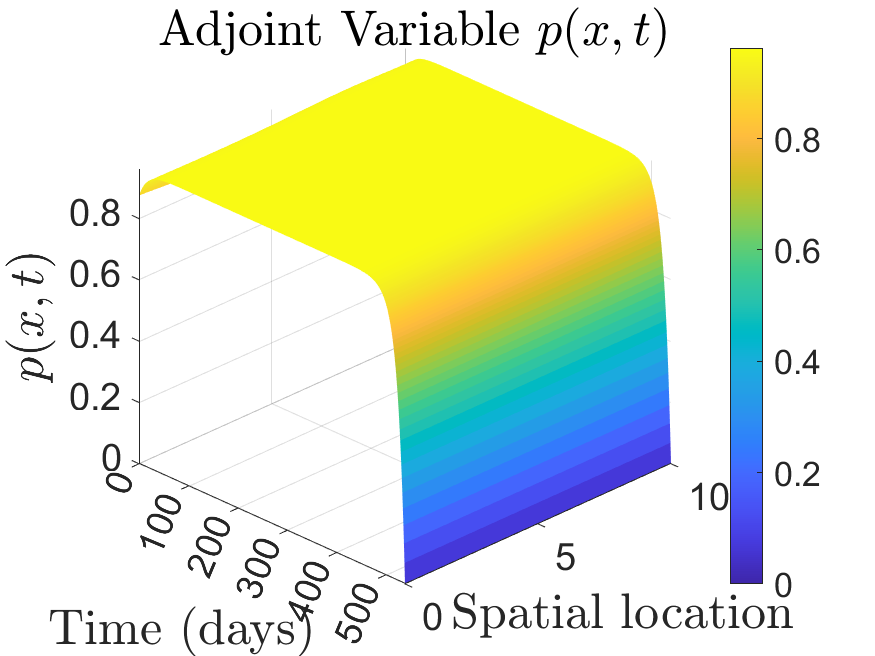}
        \vspace{2mm}
        
        (b)
    \end{minipage}

    \vspace{5mm}

    \begin{minipage}[b]{0.49\textwidth}
        \centering
        \includegraphics[width=\textwidth]{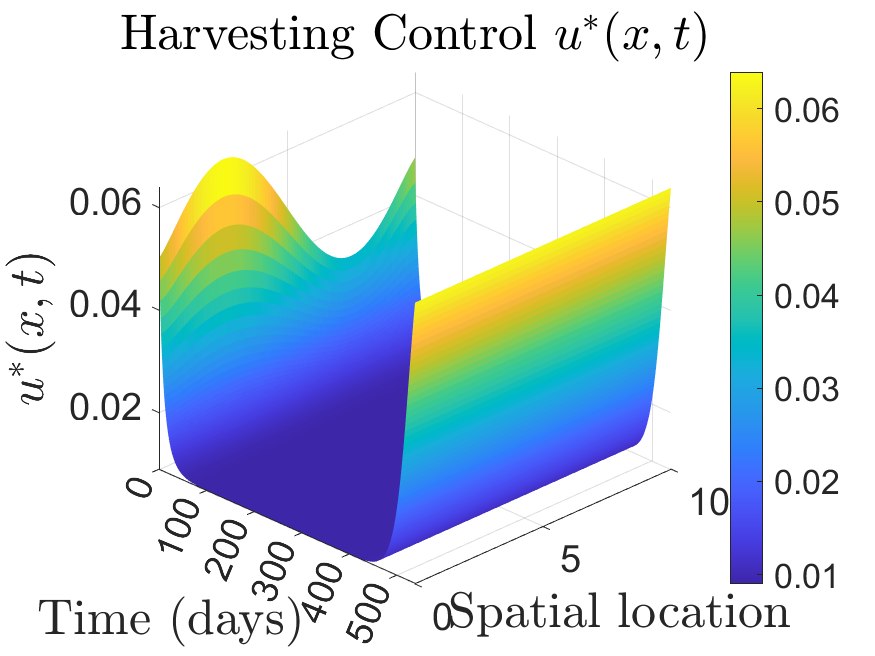}
        \vspace{2mm}
        
        (c)
    \end{minipage}

    \caption{Spatial--temporal structure of the converged optimality
    system for $r=0.99$ after the forward--backward sweep iterations:
    (a) biomass $B(x,t)$, (b) adjoint variable $p(x,t)$, and
    (c) optimal harvesting control $u^*(x,t)$.}
    \label{fig:three_figures}
\end{figure}

The biomass surface in Figure~\ref{fig:three_figures}(a) reflects the
initial spatial heterogeneity introduced through the initial biomass
distribution. As time progresses, this spatial variation is gradually
smoothed, and the biomass becomes increasingly homogeneous across the
spatial domain. This behavior is consistent with the diffusive
component of the state equation, which acts to reduce spatial
differences in the biomass distribution.

The adjoint variable shown in Figure~\ref{fig:three_figures}(b)
exhibits a corresponding spatially smooth structure. Its value remains
relatively high over most of the time horizon and decreases toward the
terminal time. In particular, the adjoint satisfies the terminal
condition
\[
p(x,T)=0,
\]
which accounts for the sharp transition observed near the terminal
time. Thus, this feature of the surface is associated with the
terminal condition of the optimality system rather than with a sudden
change in the spatial dynamics of the biomass.

The optimal harvesting control in Figure~\ref{fig:three_figures}(c)
is directly related to both the biomass and adjoint variables through
the optimality condition, Equation \eqref{eq:optimal_control_pde},
subject to the imposed admissible bounds. Consequently, regions and
periods with larger biomass contribute to a larger harvesting control,
whereas a larger adjoint value reduces the harvesting intensity. The
surface of $u^*(x,t)$ therefore reflects the combined influence of the
state and adjoint variables.

As the spatial heterogeneity of the biomass and adjoint variables is
reduced, the optimal harvesting control also becomes increasingly
homogeneous across the spatial domain. This provides a consistent
connection between the three components of the optimality system:
the diffusion-driven smoothing of the biomass, the corresponding
spatial structure of the adjoint variable, and the resulting spatial
distribution of the optimal harvesting control.

A sharp transition in the harvesting control is also visible near the
terminal time in Figure~\ref{fig:three_figures}(c). This behavior is
consistent with the terminal condition $p(x,T)=0$. At the final time,
the optimality condition consequently reduces to
\[
u^*(x,T)
=
\frac{\alpha B(x,T)}{\gamma},
\]
provided that the resulting value lies within the admissible control
interval. The terminal-time feature should therefore be interpreted
as a consequence of the imposed terminal condition rather than as a
separate biological transition.

Overall, the maximum iteration error provides the quantitative measure
of convergence, while the three-dimensional surfaces in
Figure~\ref{fig:three_figures}(a)--(c) illustrate the spatial--temporal
structure of the resulting converged solution. The progressive
reduction in spatial heterogeneity and the small final control update
together support the numerical convergence of the forward--backward
sweep method for the chosen relaxation parameter.

\subsubsection{Convergence of the forward--backward iteration}

To assess the numerical convergence of the forward--backward algorithm, we monitored the change in the control between two successive iterations.
Specifically, after the $n$-th forward--backward sweep, we defined the iteration error by Equation \eqref{eq:uConverge}.
Thus, $E_n$ measures the maximum pointwise change in the computed control over the spatial and temporal grid.
The forward--backward iteration was stopped when the Equation \eqref{eq:uStop} is valid.

The relaxation parameter was set to $\omega=0.99$ to stabilize the fixed-point iteration. The overall decay of the iteration error is shown in Table~\ref{tab:convergence}.

\begin{table}[htbp]
\centering
\caption{Convergence of the forward--backward iteration.}
\label{tab:convergence}
\begin{tabular}{r c}
\hline
Iteration $n$ & Maximum control change $E_n$ \\
\hline
1   & $1.000000\times 10^{-3}$ \\
10  & $9.135172\times 10^{-4}$ \\
100 & $3.697296\times 10^{-4}$ \\
200 & $5.078566\times 10^{-4}$ \\
400 & $1.927978\times 10^{-5}$ \\
500 & $5.669646\times 10^{-6}$ \\
600 & $1.780349\times 10^{-6}$ \\
652 & $9.894185\times 10^{-7}$ \\
\hline
\end{tabular}
\end{table}

The iteration error decreases substantially as the forward--backward sweeps proceed. At iteration $n=652$, the error satisfies
\[
E_{652}=9.894185\times10^{-7}<10^{-6}.
\]
Therefore, the prescribed numerical stopping criterion is satisfied, and the forward--backward iteration is terminated after 652 iterations.
This provides a numerical convergence check for the computed forward--backward stationary solution.

\subsubsection{Grid and time-step refinement}

To assess the numerical robustness of the spatial optimal-control simulation, a grid and time-step refinement test was performed. The baseline discretization used $N_x=80$ spatial intervals and $N_t=500$
time steps, corresponding to $\Delta x=0.125$ and $\Delta t=1.08$ days.
The resolution was then doubled to $N_x=160$ and $N_t=1000$, giving
$\Delta x=0.0625$ and $\Delta t=0.54$ days. All biological and optimal-control parameters, as well as the convergence criterion, were kept unchanged.

Both simulations satisfied the prescribed forward--backward iteration tolerance of $10^{-6}$. The mean final biomass changed from $0.18572$ to
$0.18835$, corresponding to a relative difference of approximately $1.40\%$. The corresponding mean final control changed by approximately $2.08\%$, while the objective functional changed by only $0.36\%$.
These relatively small changes indicate that the computed optimal-control solution is numerically stable under the tested spatial and temporal grid refinement.

\begin{table}[htbp]
\centering
\begin{tabular}{c c c c c c c}
\hline
$N_x$ & $N_t$ & $\Delta x$ & $\Delta t$ &
$\overline{B}(T)$ & $\overline{u}(T)$ & $J(u)$ \\
\hline
80  & 500  & 0.1250 & 1.08 & 0.18572 & 0.06089 & 40.58463 \\
160 & 1000 & 0.0625 & 0.54 & 0.18835 & 0.06218 & 40.73257 \\
\hline
\end{tabular}
\caption{Grid and time-step refinement for the spatial optimal-control simulation. Here, $T=540$ days denotes the final simulation time, $\overline{B}(T)$ and $\overline{u}(T)$ denote the spatially averaged biomass and harvesting control, respectively, evaluated at the final time, and $J(u)$ denotes the objective functional.}
\label{tab:refinement}
\end{table}

\subsection{Numerical Validation of Spatial Stability}

To numerically examine the spatial stability result derived in Section 5, we
first recall how the decay rate of the first nonzero spatial mode is obtained.
Under constant harvesting \(u_c\), the spatial model can be written as
\[
\partial_t B
=
D\Delta B
+
f(B),
\]
where
\[
f(B)
=
\frac{aB}{b+B}
-(m+\alpha u_c)B.
\]

The positive spatially homogeneous equilibrium \(B^*\) satisfies
\[
f(B^*)=0,
\]
and is given by
\[
B^*
=
\frac{a}{m+\alpha u_c}-b.
\]

We introduce a small perturbation around this equilibrium by writing
\begin{equation}\label{eq:Diff}
B(x,t)=B^*+z(x,t).
\end{equation}

Substituting this expression into the spatial model gives
\[
\partial_t z
=
D\Delta z+f(B^*+z).
\]
For a sufficiently small perturbation, the nonlinear term can be linearized
around \(B^*\):
\[
f(B^*+z)
\approx
f(B^*)+f'(B^*)z.
\]
Since \(B^*\) is an equilibrium, \(f(B^*)=0\), and therefore
\[
\partial_t z
=
D\Delta z+f'(B^*)z.
\]

The derivative of the reaction term is
\[
f'(B)
=
\frac{ab}{(b+B)^2}-(m+\alpha u_c),
\]
so that
\[
f'(B^*)
=
\frac{ab}{(b+B^*)^2}-(m+\alpha u_c).
\]

For the one-dimensional domain \(\Omega=(0,L)\) with homogeneous Neumann
boundary conditions, the spatial eigenfunctions are
\[
\cos\left(\frac{k\pi x}{L}\right),
\qquad k=0,1,2,\ldots,
\]
with corresponding Laplacian eigenvalues
\[
\mu_k=\left(\frac{k\pi}{L}\right)^2.
\]
We therefore consider a perturbation in the form
\begin{equation}\label{eq:pertub}
z(x,t)
=
c_k(t)\cos\left(\frac{k\pi x}{L}\right).
\end{equation}

Since
\[
\Delta z=-\mu_k z,
\]
the perturbation equation becomes
\begin{equation}\label{eq:BPertub}
\frac{dc_k}{dt}
=
\left[f'(B^*)-D\mu_k\right]c_k.
\end{equation}

Hence, the growth or decay rate of the \(k\)-th spatial mode is
\[
\lambda_k
=
f'(B^*)-D\mu_k.
\]
For the first nonzero spatial mode, \(k=1\), this gives
\[
\boxed{
\lambda_1
=
f'(B^*)-D\left(\frac{\pi}{L}\right)^2
}.
\]

We now evaluate this expression using the parameter values in
Table~\ref{tab:Spatialparameters} and the constant harvesting effort
\(u_c=0.01\). 
Using the parameter values in Table~\ref{tab:Spatialparameters} and $u_c
= 0.01$, we obtain $\lambda_1<0$.

Thus, the first nonzero spatial mode satisfies
\[
\frac{dc_1}{dt}=\lambda_1c_1,
\]
and hence
\[
c_1(t)=c_1(0)e^{\lambda_1t}.
\]

The corresponding analytical perturbation, Equation \eqref{eq:pertub}, is 
\[
z_{\mathrm{an}}(x,t)
=
c_1(0)e^{\lambda_1t}
\cos\left(\frac{\pi x}{L}\right).
\]

For comparison with the numerical solution, we measure the perturbation
amplitude by
\[
A(t)
=
\max_{x\in[0,L]}|B(x,t)-B^*|
\]
where 
\[
B(x,0)
=
B^*
+
c_1(0)\cos\left(\frac{\pi x}{L}\right),
\qquad
c_1(0)=0.05.
\]
at the initial condition.

For the analytical solution, this amplitude is
\[
\begin{aligned}
A_{\mathrm{an}}(t)
&=
\max_{x\in[0,L]}
|z_{\mathrm{an}}(x,t)|\\
&=
|c_1(0)|e^{\lambda_1t},
\end{aligned}
\]
since
\[
\max_{x\in[0,L]}
\left|
\cos\left(\frac{\pi x}{L}\right)
\right|
=1.
\]

The spatial reaction--diffusion equation is solved numerically using the
same finite-difference discretization and time-stepping approach described
above. The numerical perturbation amplitude \(A(t)\) is then compared with
the analytical decay curve \(A_{\mathrm{an}}(t)\). 

The numerical perturbation amplitude is compared with this analytical
exponential decay curve in Figure~\ref{fig:stability_validation}. The two
curves are in close agreement, showing that the numerical perturbation
decays at a rate consistent with the analytical prediction. This provides
numerical support for the spatial stability result derived in Section 5 and
demonstrates the damping of spatial perturbations toward the homogeneous
equilibrium under the parameter regime considered.

\begin{figure}[H]
    \centering
    \includegraphics[width=0.75\textwidth]{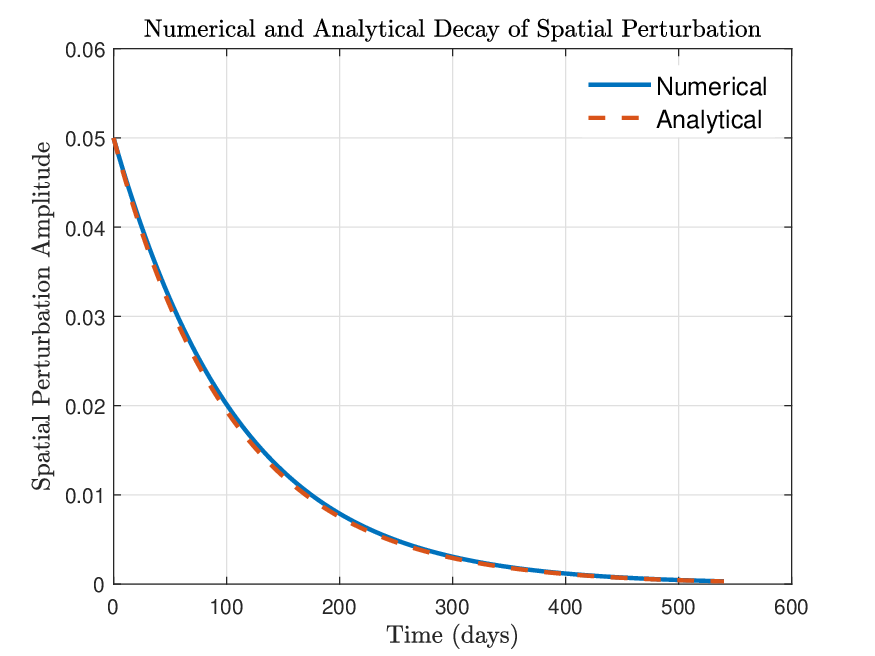}
    \caption{Comparison between the numerical decay of the spatial perturbation and the analytical exponential decay predicted by the first nonzero spatial eigenvalue \(\lambda_1\).}
    \label{fig:stability_validation}
\end{figure}

\subsection{Parameter Sensitivity and Numerical Robustness}
To assess the robustness of the numerical results, a one-at-a-time
sensitivity analysis was performed by varying the diffusion coefficient
$D$ while keeping all other model parameters and numerical settings fixed.
The baseline simulation uses $D=0.01$.
For the sensitivity analysis, the additional diffusion coefficients
\[
D=0,\;0.1,\;0.5,\;1,\;2,\;5
\]
were tested. 
For each tested value of $D$, the forward--backward sweep procedure was
repeated using the same convergence criterion as in the baseline
simulation.

The sensitivity analysis focuses on the mean final biomass and the objective functional, which provide biological and management measures of the resulting optimal solution. 
The dependence of the objective functional and mean final biomass on the diffusion coefficient is shown in the below figure.

\begin{figure}[htbp]
    \centering

    \begin{minipage}{0.49\textwidth}
        \centering
        \includegraphics[width=\textwidth]{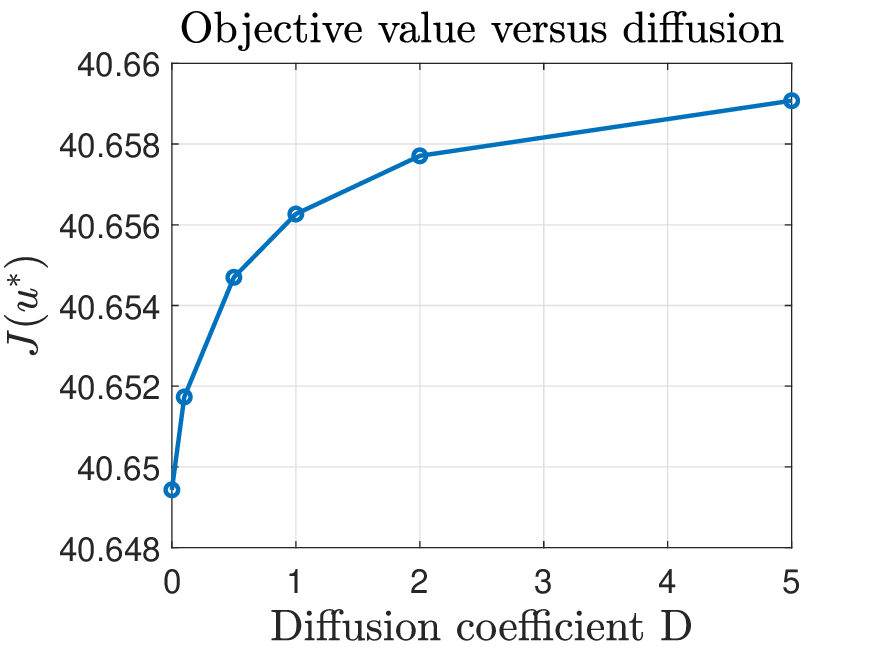}
        
        \small (a) Objective functional $J(u^*)$ versus diffusion coefficient $D$.
    \end{minipage}
    \hfill
    \begin{minipage}{0.49\textwidth}
        \centering
        \includegraphics[width=\textwidth]{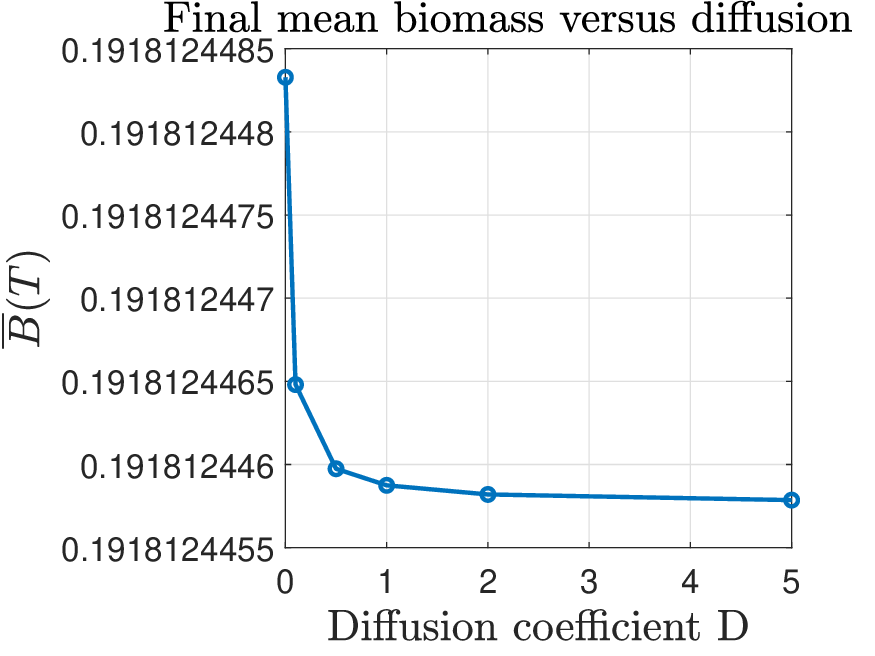}
        
        \small (b) Mean final biomass $\overline{B}(T)$ versus diffusion coefficient $D$.
    \end{minipage}

    \caption{Sensitivity of the optimal-control solution to variation in the diffusion coefficient $D$.}
    \label{fig:diffusion_sensitivity}
\end{figure}

The results show that both quantities vary only modestly over the tested range of diffusion coefficients. In particular, the mean final biomass remains nearly unchanged, while the objective functional exhibits only a small variation as \(D\) increases. These results indicate that the principal numerical outcomes of the optimal-control problem are robust to the tested variation in the diffusion coefficient.

The weak sensitivity of the aggregate quantities does not imply that diffusion has no effect on the spatial dynamics. Rather, under the parameter regime and time horizon considered, diffusion primarily influences the spatial redistribution and homogenization of biomass, while its effect on the final aggregate biomass and overall harvesting objective remains limited.

\subsubsection{Relaxation-Parameter Convergence Diagnostics}\label{sec:sensitivityRelax}

The purpose of this analysis is to assess the effect of the relaxation
parameter $r$ on the convergence behavior of the forward--backward
optimization algorithm. The relaxation parameter determines how strongly the control from
the previous iteration is retained when updating the control in the
forward--backward sweep. Therefore, different values of $r$ may lead to
different rates of change of the control during the iterative
procedure. To examine this effect, the relaxation parameter is varied
from $0.10$ to $0.99$ in increments of $0.01$. For each value of $r$,
the forward--backward sweep is performed for the same number of
iterations, namely 10,000, using the same spatial and temporal
discretization and model parameters.

The change in the control between two successive iterations is measured
using the convergence quantity defined in
Equation~\eqref{eq:uConverge},
\[
E_k =
\max_{x,t}
\left|u^{(k)}(x,t)-u^{(k-1)}(x,t)\right|.
\]
Thus, at the final iteration $k=10000$, $E_{10000}(r)$ provides the
maximum absolute change in the control between the last two iterations
for a given value of the relaxation parameter. Since the same number of
iterations is used for all values of $r$, the quantity
$E_{10000}(r)$ provides a direct comparison of the residual control
change after the prescribed number of iterations. Smaller values of
$E_{10000}(r)$ indicate that the control has undergone a smaller change
between the final two iterations and is therefore closer to a
numerically stationary state.

The resulting values of $E_{10000}(r)$ are presented in
Table~\ref{tab:relaxation_sensitivity}. The results show how the
relaxation parameter influences the convergence behavior of the
forward--backward iteration over the fixed 10,000-iteration interval.

\begin{table}[H]
\centering
\begin{tabular}{cc}
\toprule
$r$ & $E_{10000}(r)$ \\
\midrule
0.50 & 0.071073 \\
0.55 & 0.037495 \\
0.60 & 0.036014 \\
0.65 & 0.023664 \\
0.70 & 0.023247 \\
0.75 & 0.015253 \\
0.80 & 0.0098588 \\
0.85 & 0.0068305 \\
0.90 & 0.0015352 \\
0.95 & $2.7756\times10^{-17}$ \\
0.99 & $6.9389\times10^{-18}$ \\
\bottomrule
\end{tabular}
\caption{Algorithmic convergence diagnostics for selected values of the
relaxation parameter $r$. The quantity $E_{10000}(r)$ denotes the maximum
absolute change in the control between the final two iterations after
10,000 forward--backward iterations.}
\label{tab:relaxation_sensitivity}
\end{table}

The spatial--temporal distributions of the biomass $B(x,t)$, the adjoint
variable $p(x,t)$, and the optimal harvesting control $u^*(x,t)$ are
compared for $r=0.60$, $0.70$, $0.90$, and $0.99$ after 1000
forward--backward iterations. The heatmaps illustrate the spatial
structure of the numerical solutions for these representative
relaxation parameters. The solutions become progressively less
spatially heterogeneous as $r$ increases, while the corresponding
convergence measure $E_{10000}(r)$ reported in Table~\ref{tab:relaxation_sensitivity}
also decreases substantially for larger values of $r$.

\begin{figure}[H]
    \centering
    \includegraphics[width=0.75\textwidth]{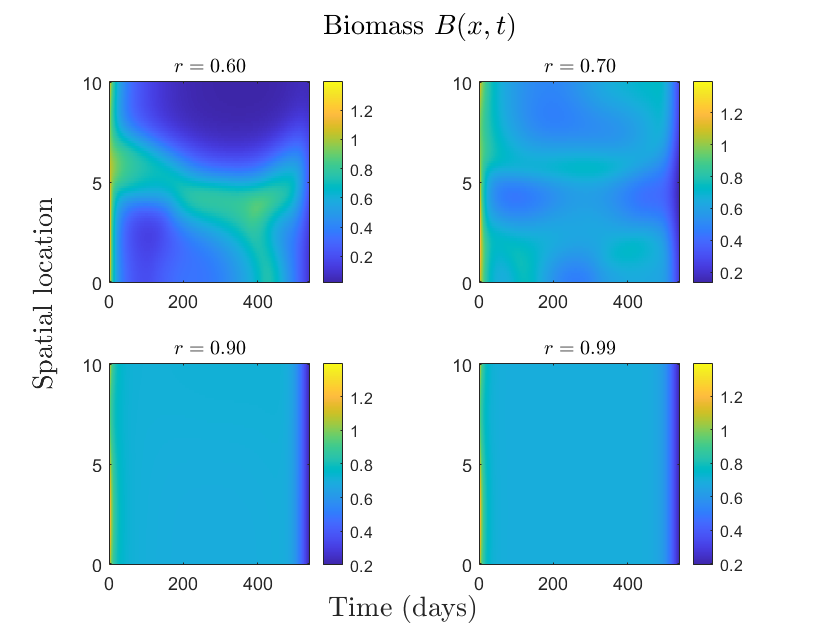}
    \caption{Spatial–temporal distribution of the biomass \(B(x,t)\) after 1000 iterations, illustrating the differences in the degree of numerical convergence after a fixed number of iterations for different relaxation parameters.}
    \label{fig:controlRelaxB}
\end{figure}

\begin{figure}[H]
    \centering
    \includegraphics[width=0.75\textwidth]{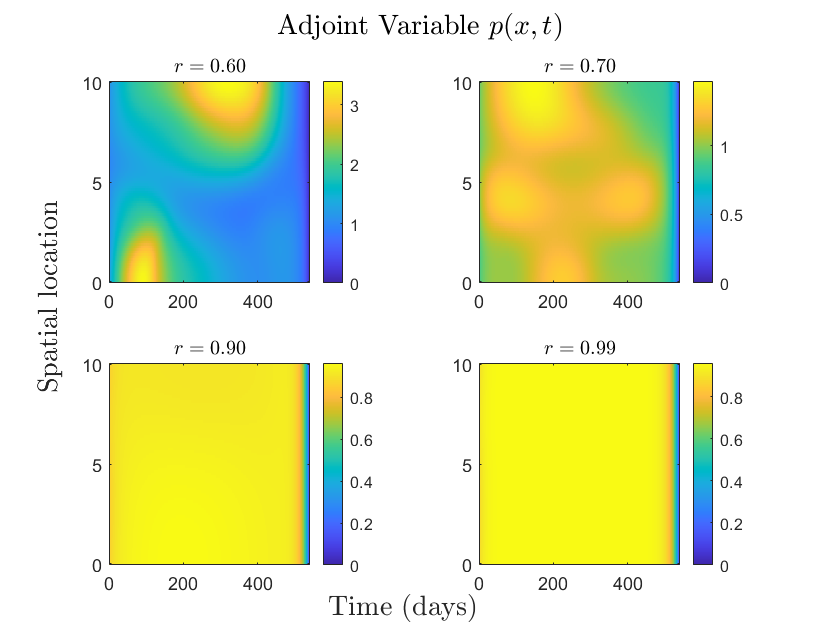}
    \caption{Spatial–temporal distribution of the adjoint variable \(p(x,t)\) after 1000 iterations, illustrating the differences in the degree of numerical convergence after a fixed number of iterations for different relaxation parameters.}
    \label{fig:controlRelaxp}
\end{figure}

\begin{figure}[H]
    \centering
    \includegraphics[width=0.75\textwidth]{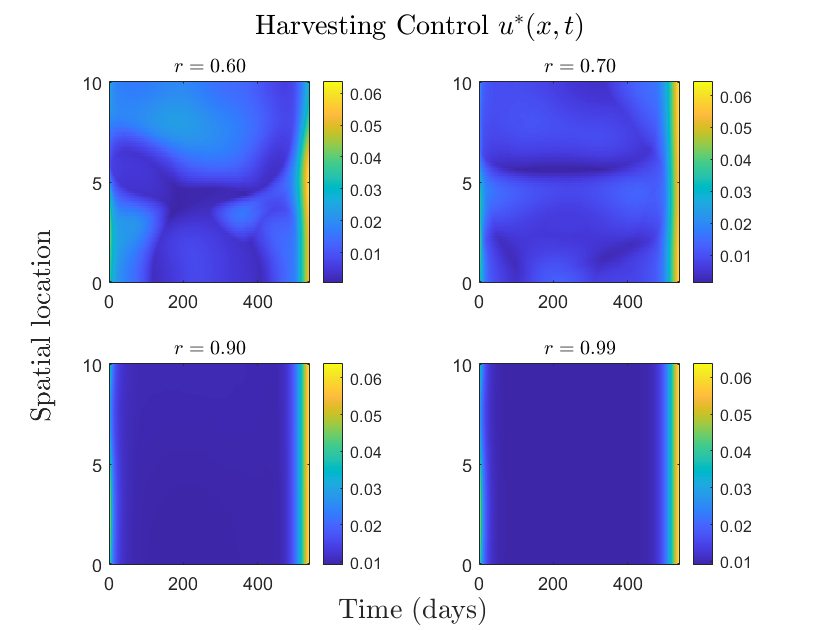}
    \caption{Spatial–temporal distribution of the optimal harvesting control \(u^*(x,t)\) after 1000 iterations, illustrating the differences in the degree of numerical convergence after a fixed number of iterations for different relaxation parameters.}
    \label{fig:controlRelaxu}
\end{figure}

\section{Discussion and Conclusion}\label{sec:discussion}

The proposed framework provides a unified way to examine the interaction between spatial biomass dynamics, marginal future biomass value, and harvesting effort. The reduced biomass model provides a tractable description of density-dependent production, natural mortality, and harvesting, while the spatial extension incorporates biomass redistribution and spatially distributed harvesting. The comparison with spawning stock biomass (SSB) observations provides a data-informed assessment \citep{slu_aqua2025} of the temporal behavior of the reduced model and supports its use as a basis for the spatial analysis.

The analytical results establish positivity, boundedness, and well-posedness of the biomass dynamics. For the reduced model, the equilibrium analysis identifies a critical harvesting threshold separating persistence and extinction regimes under constant harvesting. The spatial stability analysis further shows that diffusion contributes to the decay of small spatial perturbations around the positive spatially homogeneous equilibrium. These results provide the mathematical basis for the subsequent optimal-control formulation.

The adjoint variable provides information that complements the current biomass state. While biomass describes the present population distribution, the adjoint represents the marginal future value of biomass within the harvesting objective. This information enters directly into the optimal harvesting rule,
\[
u^*(x,t)
=
\operatorname{Proj}_{[0,u_{\max}]}
\left(
\frac{\alpha B^*(x,t)(1-p(x,t))}{\gamma}
\right).
\]
Consequently, the harvesting decision depends jointly on current biomass and its marginal future value. Biomass abundance alone therefore does not fully determine the harvesting decision within the proposed optimal-control framework.

The numerical simulations illustrate this coupled state--adjoint--control structure. Diffusion progressively reduces the initial spatial variation in biomass, while the computed adjoint and optimal harvesting fields exhibit limited spatial variation under the parameter regime considered. Thus, the present numerical results primarily demonstrate the interaction between biomass dynamics, future management value, and harvesting effort within the proposed framework. 

The parameter sensitivity analysis further indicates that the main aggregate numerical outcomes are robust to the tested variation in the diffusion coefficient. Although changes in \(D\) affect the rate of spatial redistribution and homogenization, the mean final biomass and objective functional remain relatively stable over the tested range. This supports the robustness of the principal numerical findings under the considered parameter regime.

The forward--backward sweep procedure yields a stationary solution satisfying the derived first-order optimality conditions \citep{mcasey2012convergence}. The convergence and grid-refinement tests show that the computed solution is stable under the tested numerical settings. These results provide confidence in the numerical implementation and in the reliability of the computed solution.

The present formulation adopts a simplified spatial setting that provides a clear framework for examining the interaction between biomass dynamics, future management value, and harvesting decisions. The spatial model uses a one-dimensional domain and spatially homogeneous biological parameters, while diffusion provides a simple representation of spatial redistribution. Similarly, the reduced biomass model focuses on overall biomass dynamics rather than explicitly representing age structure or environmental variability \citep{quinn1999quantitative}. The empirical comparison is data-informed and is intended to illustrate the temporal behavior of the reduced model rather than provide a full statistical calibration with parameter uncertainty. These choices keep the present analysis focused on the mathematical and optimal-control structure of the model and provide a useful foundation for more detailed extensions.

Future work could extend the framework to spatially heterogeneous biological parameters, multi-patch habitats, spatially resolved observations, or age-structured population models \citep{webb1985theory,murray2002mathematical}. Such extensions would allow the effects of spatial differences in production, mortality, and movement on the adjoint value and harvesting decisions to be investigated in greater detail.
\section*{Acknowledgements}
We acknowledge SLU Aqua, Department of Aquatic Resources, for providing the fisheries information available through Fiskbarometern.

\bibliography{bib}

@misc{slu_aqua2025,
  author = {{Institutionen f{\"o}r akvatiska resurser (SLU Aqua)}},
  title  = {Art{\"o}versikt 2025},
  year   = {2025},
  url    = {https://fiskbarometern.se/rapport/2025/species/Sill\%2FStr\%C3\%B6mming},
  note   = {Accessed 2026}
}

@Book{cantrell2004spatial,
  author    = {Cantrell, R. S. and Cosner, C.},
  publisher = {John Wiley \& Sons},
  title     = {Spatial ecology via reaction-diffusion equations},
  year      = {2004},
}

@Article{murray2002mathematical,
  author  = {Murray, J. D.},
  journal = {Mathematical Biology, Springer},
  title   = {Mathematical biology: I. An introduction. Interdisciplinary applied mathematics},
  year    = {2002},
  volume  = {17},
}

@Book{lions1971optimal,
  author    = {Lions, J. L.},
  publisher = {Springer},
  title     = {Optimal control of systems governed by partial differential equations},
  year      = {1971},
  volume    = {170},
}

@Book{lenhart2007optimal,
  author    = {Lenhart, S. and Workman, J. T.},
  publisher = {Chapman and Hall/CRC},
  title     = {Optimal control applied to biological models},
  year      = {2007},
}

@Book{webb1985theory,
  author    = {Webb, G. F.},
  publisher = {CRC Press},
  title     = {Theory of nonlinear age-dependent population dynamics},
  year      = {1985},
}

@Book{clark2010mathematical,
  author    = {Clark, C. W.},
  publisher = {John Wiley \& Sons},
  title     = {Mathematical bioeconomics: the mathematics of conservation},
  year      = {2010},
}

@Book{evans2022partial,
  author    = {Evans, L. C.},
  publisher = {American mathematical society},
  title     = {Partial differential equations},
  year      = {2022},
  volume    = {19},
}

@Book{hilborn2013quantitative,
  author    = {Hilborn, R. and Walters, C. J.},
  publisher = {Springer Science \& Business Media},
  title     = {Quantitative fisheries stock assessment: choice, dynamics and uncertainty},
  year      = {2013},
}

@Book{quinn1999quantitative,
  author    = {Quinn, T. J. and Deriso, R. B.},
  publisher = {oxford university Press},
  title     = {Quantitative fish dynamics},
  year      = {1999},
}

@Book{okubo2001diffusion,
  author    = {Okubo, A. and Levin, S. A. and others},
  publisher = {Springer},
  title     = {Diffusion and ecological problems: modern perspectives},
  year      = {2001},
  volume    = {14},
}

@Article{goethel2021revival,
  author    = {Goethel, D. R. and Cadrin, S. X.},
  journal   = {ICES Journal of Marine Science},
  title     = {Revival and recent advancements in the spatial fishery models originally conceived by Sidney Holt and Ray Beverton},
  year      = {2021},
  number    = {6},
  pages     = {2298--2315},
  volume    = {78},
  publisher = {Oxford University Press},
}

@Article{goethel2011incorporating,
  author    = {Goethel, D. R. and Quinn, T. J. and Cadrin, S. X.},
  journal   = {Reviews in Fisheries Science},
  title     = {Incorporating spatial structure in stock assessment: movement modeling in marine fish population dynamics},
  year      = {2011},
  number    = {2},
  pages     = {119--136},
  volume    = {19},
  publisher = {Taylor \& Francis},
}

@Article{ding2009optimal,
  author    = {Ding, W. and Lenhart, S.},
  journal   = {Natural Resource Modeling},
  title     = {Optimal harvesting of a spatially explicit fishery model},
  year      = {2009},
  number    = {2},
  pages     = {173--211},
  volume    = {22},
  publisher = {Wiley Online Library},
}

@Article{cohen1987review,
  author    = {Cohen, Y.},
  journal   = {Canadian Journal of Fisheries and Aquatic Sciences},
  title     = {A review of harvest theory and applications of optimal control theory in fisheries management},
  year      = {1987},
  number    = {S2},
  pages     = {s75--s83},
  volume    = {44},
  publisher = {NRC Research Press Ottawa, Canada},
}

@Article{kelly2016optimal,
  author    = {Kelly J., Michael R. and Xing, Y. and Lenhart, S.},
  journal   = {Natural Resource Modeling},
  title     = {Optimal fish harvesting for a population modeled by a nonlinear parabolic partial differential equation},
  year      = {2016},
  number    = {1},
  pages     = {36--70},
  volume    = {29},
  publisher = {Wiley Online Library},
}

@Article{mcasey2012convergence,
  author    = {McAsey, M. and Mou, L. and Han, W.},
  journal   = {Computational Optimization and Applications},
  title     = {Convergence of the forward-backward sweep method in optimal control},
  year      = {2012},
  number    = {1},
  pages     = {207--226},
  volume    = {53},
  publisher = {Springer},
}

@Article{soukaina2022optimal,
  author    = {Soukaina, B. R. and Imane, A. and Mostafa, R. and Naceur, A. and Youssef, E. F.},
  journal   = {Chaos, Solitons \& Fractals},
  title     = {Optimal control of a phytoplankton-zooplankton spatiotemporal discrete bioeconomic model},
  year      = {2022},
  pages     = {112020},
  volume    = {158},
  publisher = {Elsevier},
}

\end{document}